\numberwithin{equation}{section}
\newtheorem{thm}{Theorem}[section]
\newtheorem*{thm1}{Theorem \ref{dam1}}
\newtheorem*{thm2}{Theorem \ref{dam2}}
\newtheorem{claim}[thm]{Claim}
\newtheorem{cor}[thm]{Corollary}
\newtheorem{lem}[thm]{Lemma}
\newtheorem{prop}[thm]{Proposition}
\theoremstyle{definition}
\newtheorem{defn}[thm]{Definition}
\newtheorem{qstn}[thm]{Question}
\theoremstyle{remark}
\newcommand{\N}{\mathbb{N}}
\newcommand{\Q}{\mathbb{Q}}
\newcommand{\M}{\mathscr{M}}
\newcommand{\res}{\upharpoonright}
\newcommand{\ngeq}{\not \geq}
\newcommand{\emp}{\emptyset}
\newcommand{\0}{{\bf 0}}
\newcommand{\RCA}{\mathsf{RCA}_0}
\newcommand{\WKL}{\mathsf{WKL}_0}
\newcommand{\ACA}{\mathsf{ACA}_0}
\newcommand{\RT}{\mathsf{RT}}
\newcommand{\SRT}{\mathsf{SRT}}
\newcommand{\SRAM}{\mathsf{SRAM}}
\newcommand{\ASRT}{\mathsf{ASRT}}
\newcommand{\ASRAM}{\mathsf{ASRAM}}
\newcommand{\DNR}{\mathsf{DNR}}
\newcommand{\COH}{\mathsf{COH}}
\begin{document}

\title{Stable Ramsey's theorem and measure}
\author{Damir D. Dzhafarov}
\address{University of Chicago}
\email{damir@math.uchicago.edu}
\thanks{{\em $2010$ Mathematics Subject Classification.} Primary 03D80, 05D10, 03D32, 03F35. }
\thanks{The author is grateful to his thesis advisers, Robert Soare, Denis Hirschfeldt, and Antonio Montalb\'{a}n, for valuable insights and helpful comments during the preparation of this article.  He also thanks the anonymous referee for a careful reading of the article, and for numerous comments that helped to improve it.}

\begin{abstract}
The stable Ramsey's theorem for pairs has been the subject of numerous investigations in mathematical logic.
We introduce a weaker form of it by restricting from the class of all stable colorings to subclasses of it that are non-null in a certain effective measure-theoretic sense.  We show that the sets that can compute infinite homogeneous sets for non-null many computable stable colorings and the sets that can compute infinite homogeneous sets for all computable stable colorings agree below $\emp'$ but not in general.  We also answer the analogs of two well known questions about the stable Ramsey's theorem by showing that our weaker principle does not imply $\COH$ or $\WKL$ in the context of reverse mathematics.
\end{abstract}

\maketitle

\section{Introduction}\label{sec_intro}
The logical content of Ramsey's theorem has been studied extensively from the point of view of computability theory, beginning with the work of Jockusch \cite{J}. Previous investigations, a partial survey of which can be found in \cite{CJS}, pp. 5--8, have been primarily concerned with identifying which complexity classes do or do not contain homogeneous sets for all computable colorings, thereby gauging the general difficulty of finding solutions to instances of Ramsey's theorem.

In this article, we concentrate on the {\em stable} form of Ramsey's theorem, which has played an important role in the study of Ramsey's theorem proper.  We restrict our analysis from the class of all stable colorings to ``large'' or non-null subclasses of it, using a notion of nullity for $\Delta^0_2$ sets (see Section \ref{sec_meas}).  A previous result in this direction was obtained by Hirschfeldt and Terwijn \cite[Theorem 3.1]{HT} and appears as Theorem \ref{thm:ht} below.  The focus here is on classifying properties of homogeneous sets of stable colorings not, as above, into those that are and are not universal, but into those that are and are not typical.

We begin by reviewing some of the terminology specific to the study of Ramsey's theorem.  We refer the reader to Soare \cite{So} for general background material on computability theory.

\begin{defn}\label{defn_basic_bkg}
Let $X$ be an infinite subset of $\omega$ and fix $n,k \in \omega$.
\begin{enumerate}
\item $[X]^n$ denotes the set of all subsets of $X$ of cardinality $n$.
\item A {\em $k$-coloring of $[X]^n$} is a map $f : [X]^n \to k$, where $k$ is identified with the set of its predecessors, $\{0,\ldots,k-1\}$.
\item A set $H \subseteq X$ is {\em homogeneous} for $f$ provided $f \res [H]^n$ is constant.
\item If $X = \omega$ and $n = k = 2$, we call $f$ simply a {\em coloring of pairs}, and if in addition $\lim_s f(x,s)$ exists for all $x$ we call $f$ a {\em stable coloring}.
\end{enumerate}
\end{defn}

Ramsey's theorem for pairs, denoted $\RT^2_2$, asserts that every coloring of pairs has an infinite homogeneous set, while the stable Ramsey's theorem, denoted $\SRT^2_2$, makes this assertion only for stable colorings.  Restricting to computable colorings allows for  the study of the effective content of homogeneous sets.  For stable colorings, this reduces via the limit lemma to the study of infinite subsets and cosubsets (i.e., subsets of complements) of $\Delta^0_2$ sets (for details, see \cite{CJS}, Lemma 3.5). In particular, every computable stable coloring has an infinite homogeneous set of degree at most $\0'$, a fact not true of computable colorings in general (\cite{J}, Corollary 3.2).

A natural question then is whether this upper bound can be improved somehow.  With respect to the low$_n$ hierarchy, the following well-known results give a sharp separation.


\begin{thm}[Cholak, Jockusch, and Slaman \cite{CJS}, Theorem 3.1]\label{thm_cjs}
Every computable coloring of pairs (not necessarily stable) has a low$_2$ infinite homogeneous set.
\end{thm}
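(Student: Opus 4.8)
The plan is to reduce to the stable case by passing to a cohesive set, to solve the stable case with a low$_2$ solution, and to arrange both halves so that they relativize and compose. Fix a computable $f\colon[\w]^2\to 2$, and for each $i$ set $R_i=\{\,j:f(\{i,j\})=1\,\}$, giving a uniformly computable sequence $\vec R=\langle R_i:i\in\w\rangle$. If $G$ is $\vec R$-cohesive, i.e.\ $G\subseteq^* R_i$ or $G\subseteq^*\overline{R_i}$ for every $i$, then writing $G=\{g_0<g_1<\cdots\}$ and $\hat f(m,n)=f(\{g_m,g_n\})$ for $m<n$, the coloring $\hat f$ is $G$-computable and stable relative to $G$, and any infinite $\hat f$-homogeneous set pulls back through $G$ to an infinite $f$-homogeneous set whose degree is bounded by its join with $G$. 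So it suffices to establish two things: (1) there is an $\vec R$-cohesive $G$ with $G''\le_T\emp''$; and (2) relativizing to an arbitrary oracle, every $G$-computable stable coloring has an infinite homogeneous set $H$ with $(H\oplus G)''\le_T G''$. Composing (1) and (2) yields an $f$-homogeneous set whose double jump is computable from $\emp''$, that is, a low$_2$ one.

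For (1), the approach is to build $G$ by Mathias forcing with computable reservoirs: a descending sequence of conditions $(F_s,X_s)$ with $F_s$ finite, $X_s$ infinite and computable, $\max F_s<\min X_s$, and $G=\bigcup_s F_s$. Cohesiveness is handled at stage $s$ by replacing $X_s$ with whichever of $X_s\cap R_s$ and $X_s\setminus R_s$ is infinite and adjoining its least element to $F$; since $\vec R$ is computable, $\emp''$ decides which alternative is infinite, so this much of the construction is $\emp''$-effective. To get $G''\le_T\emp''$ as well, one interleaves, for each $e$, a requirement that settles whether $e\in G''$: as is standard for Mathias forcing with computable reservoirs, the forcing relation for $\Sigma^0_2$ facts about the generic can be decided by $\emp''$, so at each stage one can pass to an extension that decides the next such fact while keeping the reservoir infinite and computable. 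Running the whole construction computably in $\emp''$ and reading off the decided facts gives $G''\le_T\emp''$.

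For (2), by the limit lemma a $G$-computable stable coloring corresponds to a $\Delta^{0,G}_2$ set $D$, the set of $x$ whose eventual color is $1$; from an infinite subset of $D$, respectively of $\overline D$, a greedy thinning produces (using the coloring, which is $G$-computable) an infinite homogeneous set of color $1$, respectively $0$, of no greater degree. At least one of $D$, $\overline D$ is infinite, and $G''$ tells which, so it is enough to show that every infinite $\Delta^{0,G}_2$ set has an infinite subset $A$ with $(A\oplus G)''\le_T G''$ — and this is the same Mathias-forcing schema as in (1), now with reservoirs that are infinite $\Delta^{0,G}_2$ subsets of the given set, carried out effectively in $G''$ and with the jump-deciding requirements relativized. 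Putting it together: take $G$ from (1), let $\hat f$ be the induced $G$-computable stable coloring, apply (2) to obtain an infinite $\hat f$-homogeneous set $H$ with $(H\oplus G)''\le_T G''\le_T\emp''$, and pull back through $G$.

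The hard part is controlling the \emph{double} jump throughout. Maintaining infinitely many elements in the reservoir is a $\Pi^0_2$ constraint, and it must be kept up alongside requirements that speak about the first jump of the generic — hence about facts of $\Sigma^0_2$ complexity — so it does not suffice that $\emp''$ merely computes $G$ or $G'$; the construction has to be organized so that $\emp''$ can also read off the outcomes of the jump-of-the-jump requirements. Verifying that the relevant forcing relation is within reach of $\emp''$, and that for each $\Sigma^0_2$ fact the conditions deciding it are dense in a way compatible with preserving infinite reservoirs of the required complexity, is the technical heart of the argument; by comparison the reduction to the stable case and the greedy thinning are routine.
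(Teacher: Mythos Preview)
The paper does not give a proof of this theorem; it is quoted as a background result with a citation to Cholak, Jockusch, and Slaman \cite{CJS}, so there is no ``paper's own proof'' to compare against. Your outline is precisely the Cholak--Jockusch--Slaman strategy: pass to a low$_2$ cohesive set via Mathias forcing with computable reservoirs, then solve the resulting stable instance with double-jump control relative to the cohesive set, and compose.

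One caution on step~(2): saying it is ``the same Mathias-forcing schema as in (1), now with $\Delta^{0,G}_2$ reservoirs'' is too quick. With $\Delta^{0,G}_2$ (that is, $G'$-computable) reservoirs, the naive relativization of the step-(1) analysis pushes the $\Sigma^0_2$ forcing relation up to the $G'''$ level, which would only yield $(A\oplus G)''\le_T G'''$. Cholak--Jockusch--Slaman's actual proofs of the stable low$_2$ case are not literally this: one goes through first-jump control and $\Pi^0_1$ classes (their Section~4), the other through a more direct construction (their Section~5), and the paper you are reading explicitly notes (beginning of Section~\ref{sec_sram}) that neither of those proofs is uniform. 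There \emph{is} a Mathias-style argument that achieves $(A\oplus G)''\le_T G''$ with $\Delta^{0,G}_2$ reservoirs, but it rests on a specific lemma --- roughly, that for Mathias conditions the forcing of $\Pi^0_2$ facts about the generic can be analyzed with one fewer jump than blind relativization suggests, because one can split on whether a certain $\Sigma^0_1$ class of finite extensions is dense along the reservoir --- and this needs to be stated and proved, not inferred by analogy. Your last paragraph correctly flags this as the technical heart; just be aware that it is genuinely harder than, and not a rerun of, the step-(1) analysis.
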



\begin{thm}[Downey, Hirschfeldt, Lempp, and Solomon \cite{DHLS}]\label{thm:no_low}
There exists a computable stable coloring with no low infinite homogeneous set.
\end{thm}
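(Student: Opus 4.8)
The statement is that there is a computable stable coloring $f : [\omega]^2 \to 2$ with no low infinite homogeneous set. By the remarks preceding the theorem (via the limit lemma), a computable stable coloring $f$ corresponds to a $\Delta^0_2$ set $A = \{x : \lim_s f(x,s) = 1\}$, and an infinite homogeneous set for $f$ is (essentially) an infinite subset of $A$ or an infinite subset of $\overline{A}$. So the plan is to build $A \le_T \emp'$ by a $\emp'$-computable approximation $\{A_s\}$ so that no low set $L$ computes an infinite subset of $A$ or of $\overline{A}$. A set $L$ is low iff $L' \le_T \emp'$, and a standard pairing shows it suffices to defeat, for each index $e$, the $e$-th potential low set together with the $e$-th potential homogeneous set it computes; more precisely, we enumerate requirements

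\[
R_e : \Phi_e^{\emp'} \text{ total} \Rightarrow W^{\Phi_e^{\emp'}}_e \text{ is not an infinite subset of } A \text{ or of } \overline{A},
\]

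where $\Phi_e^{\emp'}$ is meant to compute (the jump of) a candidate low oracle and $W^{(\cdot)}_e$ is the $e$-th c.e.\ operator applied to it — the exact bookkeeping for "low oracle $X$ with $X' = \Phi_e^{\emp'}$ computing the homogeneous set via $\Psi_e$" can be packaged into a single functional. The construction is a finite-injury (or at worst infinite-injury) argument run relative to $\emp'$.

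The core combinatorial idea is the usual one for such "no low solution" results. To satisfy $R_e$, we watch the approximation to $\Phi_e^{\emp'}$ for a stage at which it converges (with $\emp'$-use stabilized) to a value claiming that the candidate homogeneous set $H_e$ enumerates some fresh element $x$ not yet committed. When this happens, we decide $A(x)$ to be the opposite of the color that $H_e$ is claiming to be homogeneous for: if $H_e$ is supposed to be a subset of $A$, we freeze $x \notin A$; if it is supposed to be a subset of $\overline{A}$, we freeze $x \in A$. Since $A$ is only $\Delta^0_2$, we are allowed to change our mind about $A(x)$ finitely often, but each requirement only needs to act finitely often once its oracle approximation settles, so there is no conflict: lower-priority requirements are initialized, and the $\emp'$-oracle lets us know when $\Phi_e^{\emp'}$ has truly stabilized. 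The key point enabling the diagonalization is that $\Phi_e^{\emp'}$ is a \emph{fixed} $\emp'$-computable object whose final behavior $\emp'$ can detect, so the diagonalization against it can be carried out effectively in $\emp'$ — exactly the amount of power we are permitted in building $A$.

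The main obstacle, and the reason the theorem is not entirely routine, is the interaction between the two "sides" of the homogeneous set (subset of $A$ versus subset of $\overline{A}$) and the requirement that $f$ be \emph{stable and computable}, i.e., that $A$ genuinely be $\Delta^0_2$ with a nice approximation rather than merely $\emp'$-computable in some abstract sense. One has to ensure that every column $\lim_s f(x,s)$ actually converges, which forces each $x$ to be frozen permanently after finitely many stages; this is where priority and initialization must be set up carefully so that each $x$ is claimed by at most one requirement and released by higher-priority injury only finitely often. A secondary subtlety is that a candidate solution $H_e$ might be a subset of $A$ on a \emph{finite} set and then stop — but then it is not infinite, so $R_e$ is vacuously met; the construction only needs to act when $H_e$ threatens to grow, and the $\emp'$ oracle is used precisely to distinguish "$H_e$ will put in a new element" from "$H_e$ has halted." I expect the bulk of the verification to be the standard lemma that each requirement acts only finitely often and that the resulting $A$ has a total $\Delta^0_2$ approximation, from which stability and computability of $f$ follow, while correctness (no low $H_e$) is immediate from the diagonalization step.
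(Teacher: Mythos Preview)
The paper does not give its own proof of Theorem~\ref{thm:no_low}; it is quoted as a background result from \cite{DHLS} and then used later (for example, in the construction for Theorem~\ref{dam2}, where one fixes a $\Delta^0_2$ set $A$ with no low infinite subset or cosubset). So there is nothing in the paper to compare your proposal against.

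As to the proposal itself, the overall architecture---reduce to building a $\Delta^0_2$ set $A$ with no low infinite subset or cosubset, index the candidate low sets by lowness indices, and run a $\emp'$-priority construction that, for each candidate $H_e$, commits two fresh elements of $H_e$ to opposite sides of $A$---is the right one and is essentially how \cite{DHLS} proceeds. Two points of your write-up are misleading, though. First, ``the $\emp'$-oracle lets us know when $\Phi_e^{\emp'}$ has truly stabilized'' and ``the $\emp'$ oracle is used precisely to distinguish `$H_e$ will put in a new element' from `$H_e$ has halted'\,'' overstate what $\emp'$ can do: whether a $\emp'$-computable set has another element is $\Sigma^{0,\emp'}_1$, not decidable by $\emp'$. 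The construction does not need to decide this; it simply searches, and if the search never returns then $H_e$ is finite and $R_e$ is vacuous. Your own parenthetical already says this, so the fix is just to drop the claim that $\emp'$ \emph{decides} the question. Second, the requirement as you wrote it (with $W_e^{\Phi_e^{\emp'}}$) is not the right object: the homogeneous set to be defeated is the low set $L$ itself, recovered from its jump $\Phi_e^{\emp'}$ via a fixed computable $h$ with $x \in L \Leftrightarrow h(x) \in L'$, not a c.e.\ set relative to $L'$. With those corrections your plan goes through as a finite-injury argument relative to $\emp'$.
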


\noindent The next result gives instead an improvement over the original bound with respect to the arithmetical hierarchy.

\begin{thm}[Hirschfeldt, Jockusch, Kjos-Hanssen, Lempp, and Slaman \cite{HJKLS}, Corollary 4.6]\label{thm_huge}
Every computable stable coloring has an infinite homogeneous set of degree strictly below $\0'$.
\end{thm}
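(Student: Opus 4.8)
The plan is to prove Theorem~\ref{thm_huge}, that every computable stable coloring $f$ of pairs has an infinite homogeneous set of degree strictly below $\0'$. First I would invoke the standard reduction: via the limit lemma, the function $d(x) = \lim_s f(x,s)$ is $\Delta^0_2$, so the set $D = \{x : d(x) = 1\}$ is $\Delta^0_2$, and any infinite subset of $D$ or of $\omega \setminus D$ is homogeneous for $f$ (of color $1$ or $0$ respectively). Thus it suffices to show that every $\Delta^0_2$ set $D$ has an infinite subset or cosubset of degree strictly below $\0'$. At least one of $D$, $\omega\setminus D$ is infinite; by symmetry say $D$ is infinite, and we seek an infinite subset $G \subseteq D$ with $\deg(G) < \0'$.

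The main tool I would use is the notion of a \emph{$\Delta^0_2$ approximation} to $D$ together with a forcing/initial-segment construction that is guided by $\0'$ but does not itself compute $\0'$. Concretely, I would build $G$ as an increasing union of finite strings $\sigma_0 \subseteq \sigma_1 \subseteq \cdots$, at each stage deciding one more element of $D$ to put into $G$, while simultaneously satisfying, for each $e$, a requirement $R_e$ ensuring $\Phi_e^G \neq \chi_{\0'}$ (so $G \not\geq_T \0'$) — and arranging that $G \leq_T \0'$ by making the whole construction computable in $\0'$. Since $G \leq_T \0'$ automatically and we force $G \not\equiv_T \0'$, we get $\deg(G) < \0'$. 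The requirements $R_e$ are handled by a finite-extension argument: given $\sigma_n$, ask $\0'$ (actually $\0''$ naively, but one arranges to ask only $\0'$-questions by working with the $\Delta^0_2$ approximation and an appropriate use bound) whether there is an extension $\tau \supseteq \sigma_n$ with elements drawn from $D$ and some $x$ with $\Phi_e^\tau(x)\!\downarrow \neq \chi_{\0'}(x)$; if so, take it, otherwise move on. The subtlety is that $\chi_{\0'}$ is not computable, so one must instead diagonalize against $\0'$ via a recursion-theoretic trick: rather than target $\chi_{\0'}$ directly, use that $\0'$ is $\Sigma^0_1$-complete and build $G$ so that $\0' \not\leq_T G$ by a movable-marker or a "$G$ is low over a suitable oracle" type argument — alternatively, and more cleanly, construct $G$ to be of \emph{properly $\Delta^0_2$} but \emph{generic-enough} type so that $G' \leq_T \0'$ strictly.

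The cleanest route, and the one I expect the authors take, is to show $G$ can be made \emph{incomplete} by ensuring $G$ is \emph{GL$_1$} relative to the construction while the construction is $\0'$-computable, or even simpler: build $G$ to have a c.e.\ (in fact computable) "trace" so that, using the hypothesis that $D$ is $\Delta^0_2$, the set $G$ is actually \emph{$\omega$-c.e.}\ or \emph{$\Delta^0_2$ with a computably bounded number of mind-changes along a suitable enumeration}, from which $G <_T \0'$ follows because complete sets are not $\omega$-c.e. Specifically: fix a computable approximation $D = \lim_s D_s$; build $G$ by a greedy procedure that at stage $s$ searches for the least element currently believed to be in $D$ and not yet handled, committing it to $G$ unless a higher-priority diagonalization requirement objects. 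Carefully bounding injuries yields that membership in $G$ changes a computably bounded number of times, so $G$ is $\omega$-c.e., hence $G \not\geq_T \0'$; and $G \leq_T \0'$ since the whole construction is $\0'$-recursive.

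\textbf{Main obstacle.} The hard part is making the diagonalization against $\0'$ compatible with keeping $G$ inside the $\Delta^0_2$ set $D$ while simultaneously keeping $G$ simple enough (e.g.\ $\omega$-c.e., or otherwise provably incomplete) — naively diagonalizing requires $\0''$, and the $\Delta^0_2$ approximation to $D$ can "change its mind" about elements already placed in $G$, threatening both the infinitude of $G$ and the bound on mind-changes. Resolving this requires a priority argument with a careful injury analysis: one must show that when the approximation to $D$ settles, the requirements have only finitely many injuries, that the "greedy" elements really do accumulate into an infinite subset of the true $D$, and that the number of times a given $n$ enters or leaves $G$ is bounded by a computable function of $n$. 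Once this bookkeeping is in place, the degree bound $\0 < \deg(G) < \0'$ (strictness on the left because $D$ is noncomputable in the interesting case, or via an additional coding requirement, and on the right by incompleteness) falls out.
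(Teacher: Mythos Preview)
First, note that the paper itself does not prove Theorem~\ref{thm_huge}; it is quoted from \cite{HJKLS} as background, with no argument given. So there is no proof in the present paper to compare your attempt against.

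Independently of that, your proposal has a genuine gap. Your ``cleanest route'' hinges on the assertion that making $G$ $\omega$-c.e.\ yields $G \not\geq_T \emptyset'$ because ``complete sets are not $\omega$-c.e.''. This is false: $\emptyset'$ is c.e., hence $1$-c.e., hence trivially $\omega$-c.e.; every c.e.\ degree --- $\0'$ included --- contains $\omega$-c.e.\ sets. A computable bound on mind-changes carries no incompleteness information whatsoever, so this line of argument cannot work.

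Your other sketches do not close the gap either. You correctly flag the obstacle --- diagonalizing $\Phi_e^G \neq \chi_{\emptyset'}$ is naively a $\emptyset''$-question --- but none of the suggested fixes is an actual construction. ``GL$_1$ relative to the construction'' and ``generic-enough'' are slogans here; and note that by Theorem~\ref{thm:no_low} there exist $\Delta^0_2$ sets with no low infinite subset or cosubset, so any strategy that would force $G$ to be low must fail in general. The side remark that $\0 < \deg(G)$ follows ``because $D$ is noncomputable'' is also wrong (an infinite subset of a noncomputable set can certainly be computable), and in any case the theorem does not assert $\deg(G) > \0$.

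For orientation, the argument in \cite{HJKLS} is a cone-avoidance construction carried out computably in $\emptyset'$: via a Mathias-type forcing with $\Delta^0_2$ reservoirs one builds an infinite $G$ inside $A$ or $\overline{A}$ with $\emptyset' \not\leq_T G$ and $G \leq_T \emptyset'$. The substantive point your sketch is missing is a mechanism for deciding, with only a $\emptyset'$-oracle, whether a given $\Phi_e$ can be forced to disagree with $\emptyset'$ along extensions drawn from the current reservoir; that is where the real work lies, and nothing in your proposal supplies it.
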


The above mentioned result of Hirschfeldt and Terwijn from \cite{HT} is a measure-theoretic analysis of Theorem \ref{thm:no_low} and shows that this theorem is atypical in that the collection of computable stable colorings that actually do have a low infinite homogeneous set is not null in the sense of $\Delta^0_2$ nullity.


In this article, we similarly analyze Theorems \ref{thm_cjs} and \ref{thm_huge}.  As both theorems are positive, we turn our attention to uniformity.  Mileti \cite[Theorem 5.3.7 and Corollary 5.4.6]{M} showed that neither of these theorems admits a uniform proof.  In Section \ref{sec_sram}, we extend one of his results by showing the following:

\begin{thm}\label{dam1}
For each $\bf d < 0'$, the class of computable stable colorings having an infinite homogeneous set of degree at most $\bf d$ is $\Delta^0_2$ null.
\end{thm}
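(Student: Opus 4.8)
The plan is to reduce, via the limit lemma, to a statement about infinite subsets of $\Delta^0_2$ sets, and then to construct a $\Delta^0_2$-test by hand. First I would invoke the correspondence recalled after Definition~\ref{defn_basic_bkg} (see \cite{CJS}, Lemma~3.5): for a computable stable coloring $f$ with limit set $A_f = \{x : \lim_s f(x,s) = 1\}$, the degrees of the infinite homogeneous sets of $f$ are precisely the degrees that compute an infinite subset of $A_f$ or of $\overline{A_f}$; and conversely every $\Delta^0_2$ set is the limit set of a computable stable coloring, the measure on colorings pushing forward to the uniform measure on $2^\omega$ (this is made precise in Section~\ref{sec_meas}). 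So it suffices to prove that for each $\mathbf d < \mathbf 0'$ the class
\[
\mathcal C_{\mathbf d} \;=\; \{\, A : A \text{ or } \overline A \text{ has an infinite subset of degree} \leq \mathbf d \,\}
\]
is $\Delta^0_2$ null. Fix $D$ of degree $\mathbf d$; since $\mathbf d < \mathbf 0'$, $D$ is $\Delta^0_2$, so $\emp'$ computes $D$ and hence computes $\Phi^D$ whenever $\Phi^D$ is total. The strictness of $\mathbf d < \mathbf 0'$ is essential: for $\mathbf d = \mathbf 0'$ the class is the class of all computable stable colorings (every such coloring has an infinite homogeneous set of degree at most $\mathbf 0'$, as noted above), which is certainly not $\Delta^0_2$ null.

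Next I would construct the test. List the Turing functionals as $\Phi_0, \Phi_1, \dots$. For the $n$-th level and each $e$, using $\emp'$ as an oracle, search for the first $n+e+2$ elements $x_0 < \dots < x_{n+e+1}$ of the set with characteristic function $\Phi_e^D$; once they are found, put $V^n_e = \{A : x_0,\dots,x_{n+e+1} \in A\}$, a basic clopen set of measure $2^{-(n+e+2)}$ (and nothing while the search is unfinished). Define $W^n_e$ symmetrically with $\overline A$ in place of $A$, and set $U_n = \bigcup_e (V^n_e \cup W^n_e)$, so that $\mu(U_n) \leq 2^{-n}$. If $A \in \mathcal C_{\mathbf d}$, say $\Phi_e^D$ is the characteristic function of an infinite subset of $A$, then that subset has at least $n+e+2$ elements, all lying in $A$, so $A \in V^n_e \subseteq U_n$ for every $n$ (and symmetrically if the subset lies in $\overline A$); hence $\mathcal C_{\mathbf d} \subseteq \bigcap_n U_n$.

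I expect the main obstacle to be verifying that $\{U_n\}$ is genuinely a $\Delta^0_2$-test in the precise sense of Section~\ref{sec_meas}, and not merely a $\Sigma^0_2$ sequence of null sets: the components $V^n_e$ and $W^n_e$ as described above are only $\Sigma^0_2$ open, whereas the definition asks for honest c.e.\ open components selected from $\emp'$ with a bounded-change discipline, and the construction must be massaged to fit this. Two points need care. First, the set of $e$ for which $\Phi_e^D$ is total and infinite is not decidable from $\emp'$, so the search above is only partial; one must check that divergent searches are harmless, contributing the empty set and disturbing neither the measure bound nor the membership of the elements of $\mathcal C_{\mathbf d}$. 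Second, one must run the construction along a computable approximation $\{D_s\}$ to $D$, track how the candidate clopen sets change as $D_s$ settles, and verify that the measure enumerated at level $n$ never exceeds $2^{-n}$ and that $\emp'$ can name the final presentations within the complexity the definition allows. I would carry out both points using the apparatus developed in Section~\ref{sec_meas}.
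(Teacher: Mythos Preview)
There is a genuine gap. Your test $\{U_n\}$ is only a Martin-L\"of test relative to $\emp'$: each $U_n$ is a uniformly $\Sigma^{0,\emp'}_1$ open set, since the search for the first $n+e+2$ elements of $\Phi^D_e$ is a $\emp'$-partial procedure that may fail to halt. This shows that $\mathcal C_{\mathbf d}$ is covered by a $\emp'$-c.e.\ martingale, but the paper's notion of $\Delta^0_2$ null (Definition~\ref{defn_meas0}) requires a $\emp'$-\emph{computable} martingale, and these two notions do not coincide. The telltale sign is that your construction nowhere uses the strictness of $\mathbf d < \mathbf 0'$: with $D = \emp'$ the same argument runs verbatim, yet for $\mathbf d = \mathbf 0'$ the class $\mathcal C_{\mathbf d}$ contains every $\Delta^0_2$ set and is not $\Delta^0_2$ null by Proposition~\ref{thm:lutzbig}(1). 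Your proposed massaging via an approximation $\{D_s\}$ does not close the gap, because $\emp'$ still cannot decide whether the search for $n+e+2$ elements ever terminates; at best your test yields $\Delta^0_3$ nullity, one level too high.

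The paper's proof exploits the strict inequality through a domination argument. It fixes a total increasing $f \leq_T \emp'$ not dominated by any function of degree strictly below $\mathbf 0'$, and builds each martingale $M_e$ by stages: at a stage where $M_e$ is defined on strings of length $t$, it searches, \emph{only among numbers bounded by} $f(t)$, for an initial segment $\tau \subseteq D$ and an $x \geq t$ with $\Phi^\tau_{e,|\tau|}(x)\downarrow = 1$; if found, it doubles the bet on the branch with a $1$ at position $x$, and otherwise just copies values forward. The bound $f(t)$ makes the search total and $\emp'$-computable, so $M_e$ is a genuine $\Delta^0_2$ martingale. The domination hypothesis, applied to the $D$-computable function $g$ that locates the next witness $x$ with $\Phi^D_e(x)=1$ together with its use, guarantees that the bounded search succeeds at infinitely many stages whenever $\Phi^D_e$ is the characteristic function of an infinite set, so $M_e$ succeeds on every superset. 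The ingredient missing from your proposal is precisely this bounded search driven by an escaping function; without it the unbounded search cannot be made $\emp'$-total.
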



\noindent In Section 4, we prove the following theorem showing that uniformity results can differ between the class of all computable stable colorings and more general subclasses of it that are not $\Delta^0_2$ null.  The $\Delta^0_3$ bound also gives a partial result in the direction of showing that $<\0'$ in the preceding theorem cannot be replaced by low$_2$.

\begin{thm}\label{dam2}
There is a degree $\bf d \leq 0''$ such that the class of computable stable colorings having an infinite homogeneous set of degree at most $\bf d$ is not $\Delta^0_2$ null but is not equal to the class of all such colorings.
\end{thm}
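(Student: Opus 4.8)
\emph{Proof sketch.} The plan is to build a set $D\leq_T\emp''$ by a forcing construction carried out with oracle $\emp''$ and to take $\bf d$ to be the degree of $D$. Throughout I use the standard translation (\cite{CJS}, Lemma 3.5): via the limit lemma a computable stable coloring is coded by the $\Delta^0_2$ set $A=\{x:\lim_s f(x,s)=1\}$, an infinite homogeneous set for $f$ is exactly an infinite subset of $A$ or of $\overline A$, and the nullity notion of Section \ref{sec_meas} is witnessed by $\emp'$-effective objects, which I treat here as $\emp'$-computable martingales. Hence, to make $C_D$ (the class of computable stable colorings with an infinite homogeneous set computable from $D$) \emph{not} $\Delta^0_2$ null it suffices to arrange that for every $\emp'$-computable martingale $d$ there is a $\Delta^0_2$ set $A$ with $\limsup_n d(A\res n)<\infty$ such that $D$ computes an infinite subset of $A$ or of $\overline A$; and to make $C_D$ unequal to the class of all computable stable colorings it suffices to fix in advance a bi-immune $\Delta^0_2$ set $A_g$ (the coloring $g$) and guarantee that $D$ computes no infinite subset of $A_g$ and none of $\overline{A_g}$. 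Since $D\leq_T\emp''$, this will also give the bound ${\bf d}\leq\0''$.

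Accordingly I impose, for each index $e$ with $d_e=\Phi_e^{\emp'}$, a \emph{positive} requirement $Q_e$: if $d_e$ is a total martingale then $D$ computes an infinite homogeneous set $H_e$ for some computable stable coloring $f_e$ on which $d_e$ stays bounded, with $H_e$ not homogeneous for $g$; and, for each $j$, a \emph{negative} requirement $N_j$: $\Phi_j^D$ is not an infinite homogeneous set for $g$. To act for $Q_e$ I use $\emp'$ (available since $\emp'\leq_T\emp''$) to follow a ``cheap path'' through $d_e$, always passing to the child of no greater capital; this is a $\Delta^0_2$ set on which $d_e$ is non-increasing, and a controlled perturbation of it — exploiting that an $\emp'$-computable martingale makes only bounded wagers at each position, so that a match against $A_g$ can be afforded infinitely often and recouped — yields a $\Delta^0_2$ set $A_e$, still keeping $d_e$ bounded, that meets both $A_g$ and $\overline{A_g}$ infinitely; then an infinite subset $H_e$ of $A_e$ (or of $\overline{A_e}$) with points in both $A_g$ and $\overline{A_g}$ is a legitimate witness for $Q_e$ that is not homogeneous for $g$. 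This $H_e$ is coded into $D$ bit by bit, and $N_{k(e)}$, where $\Phi_{k(e)}^D=H_e$, is then automatic. The remaining $N_j$ are met by extending the current condition to force $\Phi_j^{(\cdot)}$ to be finite or to output a point of $A_g$ together with a point of $\overline{A_g}$: if the set of points that can so appear under permitted extensions is infinite it lies in neither $A_g$ nor $\overline{A_g}$, since $A_g$ is bi-immune. All of this — deciding convergences, simulating $\emp'$, searching for forcing extensions — is done with oracle $\emp''$, so $D\leq_T\emp''$.

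The main obstacle is reconciling the two families. The $Q_e$ force $D$ to be genuinely complex: $D$ computes infinite homogeneous sets for a non-null class, so by Theorem \ref{dam1} it cannot have degree below $\0'$, in particular it is not low; yet the $N_j$ force $D$ to omit every homogeneous set of $g$, and the sets $H_e$, being infinite subsets of the $\emp'$-computable sets $A_e$, threaten collectively to let $D$ compute something — such as $\emp'$ — that \emph{does} compute a homogeneous set for $g$. Resolving this is the heart of the argument: the $H_e$ must be coded sparsely and generically, with the coding and the diagonalizations interleaved by a priority argument so that at every stage only finitely much of each $H_e$ is determined and the construction stays generic enough for the $N_j$ (in the case where the only way to extend $\Phi_j^{(\cdot)}$ runs through the constrained columns, one must instead injure and restart the relevant diagonalization, which is why this is genuinely a priority rather than a plain forcing construction); and the perturbed cheap paths $A_e$ must be chosen finely enough, against the bi-immunity of $A_g$, that each $H_e$ really witnesses $Q_e$ without ever being forced to be homogeneous for $g$. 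Finally, since the $H_e$ are located below $\0''$ rather than below $\0'$, this construction gives only ${\bf d}\leq\0''$, which is the sense in which it is a partial step toward replacing ``$<\0'$'' by ``low$_2$'' in Theorem \ref{dam1}. \qed
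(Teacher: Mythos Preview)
There is a genuine gap, and it sits exactly where you flag the ``main obstacle''. Your argument for $N_j$ invokes bi-immunity of $A_g$: the set of points that can be forced into $\Phi_j^{(\cdot)}$ under permitted extensions, if infinite, must meet both $A_g$ and $\overline{A_g}$. But bi-immunity rules out only infinite \emph{c.e.}\ subsets, and once the higher-priority $Q_{e'}$ have committed columns of $D$ to specific $\Delta^0_2$ sets $H_{e'}$, the set of reachable points is c.e.\ only relative to $\bigoplus_{e'} H_{e'}$. Your cheap paths are merely $\Delta^0_2$, with no further complexity control; their join may well compute $\emp'$ and hence $A_g$ itself, so no bi-immunity hypothesis on a $\Delta^0_2$ set can apply against it. The ``injure and restart'' clause does not escape this: injuring lower-priority $Q_e$'s leaves the problematic higher-priority constraints intact, while injuring higher-priority ones would destroy convergence of the $H_e$.

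The paper closes exactly this gap with two ingredients you are missing. First, it takes for the bad coloring not a merely bi-immune set but the set $A$ of Theorem~\ref{thm:no_low} having no \emph{low} infinite subset or cosubset. Second --- and this is the real work --- it does not use raw cheap paths: Proposition~\ref{lem:uniform} (built from a universal oracle martingale, van~Lambalgen's theorem, and the Nies--Stephan bound of Theorem~\ref{nies_and_stephan}) produces, uniformly in $\emp''$, martingale-avoiding sets $L_e$ together with lowness indices for the successive joins $\bigoplus_{e'\le e} L_{e'}$. Then, when the analogue of your $N_j$ is attacked, the constraining portion of $D$ is low, and the search for a diagonalizing $x$ must succeed because a failure would exhibit a low infinite subset or cosubset of $A$. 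Your outline has the right architecture but the wrong invariants: ``$\Delta^0_2$ cheap paths plus a bi-immune target'' has to be upgraded to ``uniformly low avoiding sets plus the DHLS target'', and obtaining that uniform lowness is precisely the content of Proposition~\ref{lem:uniform}.
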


\noindent In Section \ref{sec_rm}, we introduce several combinatorial principles related to $\SRT^2_2$ from a measure-theoretic viewpoint, and study these in the context of reverse mathematics.  In particular, we introduce the principle $\ASRT^2_2$ which asserts that ``non-negligibly many'', rather than all, computable stable colorings admit a homogeneous set, and show that it lies strictly in between $\SRT^2_2$ and the axiom $\DNR$, and that it does not imply $\WKL$.  For background on reverse mathematics, see Simpson \cite{Si}.

\section{$\Delta^0_2$ measure}\label{sec_meas}
Martin-L\"{o}f introduced the definition of 1-randomness as a constructive notion of nullity.  A stricter approach is that of Schnorr \cite{Sc}, which we now briefly recall.

\begin{defn}
A {\em martingale} is a function $M : 2^{<\omega} \to \mathbb{R}^{\geq0}$ that satisfies, for every $\sigma \in 2^{<\omega}$, the averaging condition
\begin{equation}\label{cond:avg}
2M(\sigma) = M(\sigma0) + M(\sigma1).
\end{equation}
We say that $M$ {\em succeeds} on a set $A$ if $\limsup_{n \to \infty} M(A \res n) = \infty$, and we let the {\em success set} of $M$, $S[M]$, be the class of all sets on which $M$ succeeds.\end{defn}

\noindent Unless otherwise noted, we shall assume that all our martingales are rational-valued, so that it makes sense to speak of martingales being computable.  A class $\mathscr{C} \subseteq 2^{\omega}$ is said  to be {\em computably null} if there is a computable martingale $M$ which succeeds on each $A \in \mathscr{C}$, and {\em Schnorr null} if in fact there is a computable nondecreasing unbounded function $h$ with $\limsup_{n \to \infty} \frac{M(A \res n)}{h(n)} = \infty$ for every such $A$ (i.e., the martingale succeeds sufficiently fast).
The motivation here comes from the following classical result of Ville.  The interested reader may wish to consult \cite{Tt}, Section 1.5, for a thorough treatment of effective measure, and \cite{DH} for background on algorithmic complexity.

\begin{thm}[Ville's theorem]
A class $\mathscr{C} \subseteq 2^{\omega}$ has Lebesgue measure $0$ if and only if there is martingale $M$ such that $\mathscr{C} \subseteq S[M]$.
\end{thm}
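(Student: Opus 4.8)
This is a classical fact (due to Ville), and the plan is to prove the two implications by separate, standard arguments.

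\emph{From a covering martingale to measure zero.} I would suppose $M$ is a martingale with $\mathscr{C} \subseteq S[M]$. If $M(\emp) = 0$, then the averaging condition \eqref{cond:avg} together with nonnegativity force $M \equiv 0$, so $S[M] = \emp$ and there is nothing to prove; otherwise, after rescaling, I may assume $M(\emp) = 1$. The crucial ingredient is the martingale form of Kolmogorov's inequality: for every finite prefix-free $P \subseteq 2^{<\omega}$,
\[
\sum_{\sigma \in P} 2^{-|\sigma|} M(\sigma) \leq M(\emp),
\]
which I would prove by building $P$ up from $\{\emp\}$ through replacements $\sigma \mapsto \{\sigma 0, \sigma 1\}$ (these preserve the left-hand side by \eqref{cond:avg}) and deletions (these can only decrease it), and then extend to infinite $P$ by monotone convergence. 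Next, for each $k \in \w$ let $P_k$ consist of those $\sigma$ with $M(\sigma) > 2^k$ but $M(\tau) \leq 2^k$ for every proper initial segment $\tau$ of $\sigma$. Then $P_k$ is prefix-free, the open set $U_k = \bigcup_{\sigma \in P_k}[\sigma]$ (with $[\sigma]$ the set of reals extending $\sigma$) has measure $\sum_{\sigma \in P_k} 2^{-|\sigma|} \leq 2^{-k}\sum_{\sigma \in P_k} 2^{-|\sigma|} M(\sigma) \leq 2^{-k}$, and any $A$ with $\limsup_n M(A \res n) = \infty$ has an initial segment in $P_k$ and so lies in $U_k$. Hence $\mathscr{C} \subseteq S[M] \subseteq \bigcap_k U_k$, and the latter has measure at most $\inf_k 2^{-k} = 0$.

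\emph{From measure zero to a covering martingale.} Now I would suppose $\mathscr{C}$ is Lebesgue null and, using outer regularity, fix for each $k \in \w$ an open set $V_k \supseteq \mathscr{C}$ with $\lambda(V_k) \leq 2^{-k}$. To $V_k$ I would attach its density martingale $M_k(\tau) = 2^{|\tau|}\lambda(V_k \cap [\tau])$, which satisfies \eqref{cond:avg} by additivity of $\lambda$ across $[\tau 0]$ and $[\tau 1]$, obeys $0 \leq M_k(\tau) \leq 1$ and $M_k(\emp) \leq 2^{-k}$, and has $M_k(A \res n) = 1$ for all large $n$ whenever $A \in V_k$ (since then $[A \res n] \subseteq V_k$ eventually). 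Then I would set $M = \sum_{k \in \w} M_k$; since $\sum_k M_k(\tau) \leq 2^{|\tau|}\sum_k \lambda(V_k) < \infty$, the series converges pointwise to a function $2^{<\omega} \to \mathbb{R}^{\geq 0}$, and summing \eqref{cond:avg} over $k$ shows it is a martingale. Finally, for $A \in \mathscr{C}$ and any $K \in \w$ we have $A \in V_k$ for all $k < K$, so $M(A \res n) \geq \sum_{k < K} M_k(A \res n) = K$ for all large $n$; thus $\limsup_n M(A \res n) = \infty$, i.e.\ $A \in S[M]$, and $\mathscr{C} \subseteq S[M]$.

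The step I expect to be the main obstacle is the maximal inequality in the first direction --- establishing $\sum_{\sigma \in P} 2^{-|\sigma|} M(\sigma) \leq M(\emp)$ for prefix-free $P$ and wringing from it the quantitative bound $\lambda(U_k) \leq 2^{-k}$ --- since this is what converts the purely qualitative success hypothesis ``$M$ is eventually large along $A$'' into a genuine null cover of $\mathscr{C}$. In the converse direction, the only point that requires care is checking that the infinite sum $\sum_k M_k$ is well-defined and still satisfies \eqref{cond:avg}, which the uniform bound $M_k \leq 1$ and the estimate $\sum_k \lambda(V_k) < \infty$ settle.
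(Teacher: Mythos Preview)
The paper does not supply its own proof of Ville's theorem: it is stated as a classical background result (attributed to Ville, with a pointer to \cite{Tt}, Section~1.5) and used without argument. So there is no paper proof to compare against.

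Your proposal is correct and is exactly the standard proof. Both directions are handled cleanly: the Kolmogorov-type inequality $\sum_{\sigma \in P} 2^{-|\sigma|}M(\sigma) \le M(\lambda)$ for finite prefix-free $P$ (your reduction via splits and deletions is fine; alternatively one can pass to the full level-$n$ antichain for $n = \max_{\sigma \in P}|\sigma|$ and use that distinct $\sigma \in P$ have disjoint sets of length-$n$ extensions) indeed gives $\lambda(U_k) \le 2^{-k}$ and hence $S[M]$ is null. In the converse, the density martingales $M_k$ and their sum behave as you describe; the bound $M_k(\tau) \le 2^{|\tau|}\lambda(V_k) \le 2^{|\tau|-k}$ makes $\sum_k M_k(\tau)$ absolutely convergent, so the averaging condition passes to the limit termwise. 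One cosmetic point: in the paper's conventions the empty string is written $\lambda$, while $\emp$ denotes the empty set, so you would want $M(\lambda)$ rather than $M(\emp)$.
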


By relativizing computable nullity to $\emp'$, we thus obtain a notion of nullity for the class of $\Delta^0_2$ sets.

\begin{defn}\label{defn_meas0}
A class $\mathscr{C} \subseteq 2^{\omega}$ is {\em $\Delta^0_2$ null}  (or has {\em $\Delta^0_2$ measure $0$}) if there exists a $\Delta^0_2$ martingale $M$ such that $\mathscr{C} \subseteq S[M]$.
\end{defn}

\noindent The study of this notion of nullity has been conducted principally by Terwijn \cite{Tt, Tq} and by Terwijn and Hirschfeldt \cite{HT}, though in more general contexts it goes back to Schnorr (see \cite{Sc}, p. 55).  It is a reasonable notion of nullity in that many of the basic properties one would expect to hold, do.

\begin{prop}[Lutz, see \cite{Tt}, Section 1.5]\label{thm:lutzbig}
\
\begin{enumerate}
\item The class of all $\Delta^0_2$ sets is not $\Delta^0_2$ null.
\item For every $\Delta^0_2$ set $A$, $\{A\}$ is $\Delta^0_2$ null.
\item If $\mathscr{C}_0,\mathscr{C}_1,\ldots$ is a sequence of subsets of $2^\omega$ and $M_0,M_1,\ldots$ a uniformly $\Delta^0_2$ sequence of martingales such that $\mathscr{C}_e \subseteq S[M_e]$ for every $e \in \omega$, then $\bigcup_{e \in \omega}\mathscr{C}_e$ is $\Delta^0_2$ null.
\end{enumerate}
\end{prop}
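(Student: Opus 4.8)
The plan is to prove the three parts in turn; parts~(1) and~(2) are short explicit constructions, while part~(3), closure under effective countable unions, carries the substance. For~(1), fix an arbitrary $\Delta^0_2$ martingale $M$; it is enough to produce a $\Delta^0_2$ set on which $M$ fails to succeed. Build $A$ by a ``slow-down'' recursion: given $\sigma = A \res n$, the averaging condition~\eqref{cond:avg} forces $\min\{M(\sigma 0), M(\sigma 1)\} \leq M(\sigma)$, so let $A(n)$ be the least $i < 2$ with $M(\sigma i) \leq M(\sigma)$. Since $M$ is rational-valued and computable from $\emp'$, the least such $i$ can be found effectively from $\emp'$, so $A$ is $\Delta^0_2$; and $M(A \res n) \leq M(\varnothing)$ for every $n$ by construction, so $A \notin S[M]$. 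For~(2), given a $\Delta^0_2$ set $A$, take the martingale that at every stage stakes all of its capital on the true next bit of $A$: set $M_A(\sigma) = 2^{|\sigma|}$ if $\sigma \prec A$ and $M_A(\sigma) = 0$ otherwise. A one-line case split on whether $\sigma \prec A$ yields the averaging condition, $M_A$ is integer-valued and computable from $\emp'$ (as ``$\sigma \prec A$'' is a $\Delta^0_2$ predicate), and $M_A(A \res n) = 2^n$, so $A \in S[M_A]$ and $\{A\}$ is $\Delta^0_2$ null.

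For~(3), first discard the indices $e$ with $M_e(\varnothing) = 0$: for these the averaging condition forces $M_e \equiv 0$, so $S[M_e] = \varnothing$ and hence $\mathscr{C}_e = \varnothing$. For the remaining $e$, pass to the rescaled martingales $N_e = 2^{-e-1} M_e / M_e(\varnothing)$. These form a uniformly $\Delta^0_2$ sequence with $N_e(\varnothing) = 2^{-e-1}$ and $S[N_e] = S[M_e] \supseteq \mathscr{C}_e$, and, iterating~\eqref{cond:avg}, $N_e(\sigma) \leq 2^{|\sigma|} N_e(\varnothing) = 2^{|\sigma|-e-1}$. Set $M = \sum_e N_e$. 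This last bound makes the series converge, with $M(\sigma) \leq 2^{|\sigma|}$; the averaging condition passes to $M$ by linearity; and if $A \in \mathscr{C}_e \subseteq S[N_e]$ then $\limsup_n M(A \res n) \geq \limsup_n N_e(A \res n) = \infty$, so $\bigcup_e \mathscr{C}_e \subseteq S[M]$. Finally $M(\sigma)$ is computable from $\emp'$ uniformly in $\sigma$, since truncating the defining sum at $e = |\sigma| + k$ leaves an error below $2^{-k}$.

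The main obstacle is the loose end just noted: the series $\sum_e N_e$ need not be rational-valued, whereas Definition~\ref{defn_meas0}, under the standing convention, asks for a rational-valued $\Delta^0_2$ martingale. I would close the gap by replacing $M$ with a rational-valued $\Delta^0_2$ martingale $M' \geq M$, built level by level: at each node, split the running surplus $M'(\sigma) - M(\sigma)$ evenly between the two successors, then round the value at one successor down to a nearby dyadic rational, absorbing the rounding error into the surplus of the other. A short estimate shows the surplus then stays within a fixed interval bounded away from $0$, so $M'$ is a genuine rational-valued martingale, is computable from $\emp'$, and satisfies $S[M] \subseteq S[M']$. Alternatively, one may simply observe that allowing $\emp'$-approximable real-valued martingales in Definition~\ref{defn_meas0} yields the identical notion of $\Delta^0_2$-nullity, which sidesteps the rounding argument entirely.
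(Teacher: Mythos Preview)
The paper does not prove this proposition; it is quoted with attribution to Lutz (via Terwijn's thesis) and used as background. Your argument is the standard one and is correct in all essentials: the slow-down construction for~(1), the doubling martingale for~(2), and the weighted sum $\sum_e 2^{-e-1} M_e/M_e(\lambda)$ for~(3) are exactly how these facts are usually established.

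One small comment on the rounding patch in~(3). Your sketch is workable, but as written it is a bit loose: to round $M(\sigma 0)+s$ down to a dyadic rational and still keep the new surplus at $\sigma 0$ positive, you need to know $M(\sigma 0)$ to within less than $s$, so the precision of the rounding must be tied to a lower bound on the current surplus, and you have to check that this lower bound does not decay to~$0$ along any branch. This can be arranged (e.g., round at level $n$ to precision $2^{-n-2}$ and start with surplus $1$, so the surplus at level $n$ stays in $[1-\sum_{k\le n}2^{-k-2},\,1+\sum_{k\le n}2^{-k-2}]\subseteq[\tfrac12,\tfrac32]$), but it is worth spelling out. Your alternative---that $\emp'$-approximable real-valued martingales and rational-valued $\Delta^0_2$ martingales define the same null classes---is cleaner and is exactly what the paper itself invokes elsewhere (see the remark preceding Theorem~\ref{nies_and_stephan} about Proposition~1.5.5 of \cite{Tt}), so you may as well cite that directly.
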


\noindent Additionally, Lutz and Terwijn (see \cite{Tt}, Theorem 6.2.1) have shown that for every $\Delta^0_2$ set $A >_T \emp$, the upper cone $\{B : B \geq_T A\}$ is $\Delta^0_2$ null, thereby effectivizing the corresponding classical result of Sacks for Lebesgue measure.  

In view of the remarks following Definition \ref{defn_basic_bkg}, we can use $\Delta^0_2$ nullity as a reasonable notion of ``smallness'' for computable stable colorings.  It is easy to show that the class of $\Delta^0_2$ sets having an infinite computable subset or cosubset is $\Delta^0_2$ null, meaning that ``most'' stable colorings do not have a computable infinite homogeneous set (it is equally easy to extend this from computable to c.e.\ or even co-c.e.). The following result is an instance where the measure-theoretic approach differs from the classical computability-theoretic one.

\begin{thm}[Hirschfeldt and Terwijn \cite{HT}, Theorem 3.1]\label{thm:ht}The class of low sets is not $\Delta^0_2$ null.
\end{thm}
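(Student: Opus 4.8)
The plan is to unwind the definition of $\Delta^0_2$ nullity: the class of low sets fails to be $\Delta^0_2$ null precisely when, for every $\Delta^0_2$ martingale $M$, there is a low set on which $M$ does not succeed. So fix a $\Delta^0_2$ martingale $M$, normalized so that $M(\langle\rangle)=1$, and consider
\begin{equation*}
C = \{ A \in \CS : (\forall n)\, M(A \res n) \leq 2 \}.
\end{equation*}
Every $A \in C$ has $\limsup_n M(A \res n) \leq 2 < \infty$, so $C \cap S[M] = \emp$; the tree $\{\sigma \in \FS : (\forall \tau \preceq \sigma)\, M(\tau) \leq 2\}$ is computable from $\emp'$ because $M$ is $\Delta^0_2$, so $C$ is a $\Pi^0_1$ class relative to $\emp'$; and by the usual maximal inequality for martingales $\mu(C) \geq 1/2 > 0$. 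It therefore suffices to produce a low member of $C$; more generally, it would suffice to show that every nonempty $\Pi^0_1(\emp')$ class of positive Lebesgue measure has a low member.

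I would build such a member by forcing, using nonempty $\Pi^0_1(\emp')$ subclasses of $C$ of positive measure as conditions and, at stage $e$, deciding whether $\Phi_e^A(e)\downarrow$. Start with $P_0 = C$. Given a condition $P_e$, let $Q_e = \{X \in P_e : \Phi_e^X(e)\uparrow\}$, again a $\Pi^0_1(\emp')$ class. If $\mu(Q_e)$ is at least a fixed, suitably large fraction of $\mu(P_e)$, set $P_{e+1} = Q_e$, which forces $\Phi_e^X(e)\uparrow$ for every $X$ extending the condition. Otherwise $\mu(P_e \setminus Q_e)$ is at least a comparably large fraction of $\mu(P_e)$; since $\{X : \Phi_e^X(e)\downarrow\}$ is open, it is a union of basic clopen sets $[\tau]$, each of which already forces convergence with use at most $|\tau|$, and intersecting $P_e$ with finitely many of these that together capture enough of its measure gives a positive-measure $P_{e+1}$ forcing $\Phi_e^X(e)\downarrow$. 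In either case $P_{e+1}$ is nonempty, so $\bigcap_e P_e \neq \emp$ by compactness, and any $A$ in this intersection lies in $C$ and has $A'$ equal to the function recording which case occurred at each stage; hence, provided that record is computable from $\emp'$, we get $A' \leq_T \emp'$, and $A$ is low.

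The crux — and the only place where real work is required — is making this construction run with $\emp'$ as its oracle. In the classical low basis theorem one uses that $\emp'$ decides emptiness of $\Pi^0_1$ classes, but $\emp'$ does not decide emptiness, nor compare measures, of $\Pi^0_1(\emp')$ classes, so the dichotomy at stage $e$ is not on its face $\emp'$-decidable. I would address this by carrying along, with each condition $P_e$, an explicit positive rational $\epsilon_e$ that is a guaranteed lower bound on $\mu(P_e)$ by virtue of how $P_{e+1}$ is chosen from $P_e$ (not by any verification $\emp'$ performs), and by choosing the comparison thresholds at successive stages with enough separation, and letting the $\epsilon_e$ decrease slowly enough to stay positive, that $\emp'$ can always eventually determine which branch applies from its one-sided approximations to the relevant measures and then, in the convergence branch, search until finitely many cylinders capture the required measure. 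Verifying that this bookkeeping closes up — that at every stage $\emp'$ genuinely commits to a correct branch and a correct $P_{e+1}$ — is the technical heart of the argument, and is exactly where positivity of $\mu(C)$ is used.
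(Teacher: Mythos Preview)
The paper does not itself prove Theorem~\ref{thm:ht}---it is quoted from \cite{HT}---but its proof of the uniform strengthening, Proposition~\ref{lem:uniform}, specializes to one. That argument is entirely different from yours: one adds to the given $\Delta^0_2$ martingale $M$ a fixed $\Delta^0_2$ martingale whose success set contains every non-$1$-random real (obtained from a universal c.e.\ martingale via the index $u$ discussed before Theorem~\ref{nies_and_stephan}), and builds $A$ by the greedy rule $A(n)=0$ iff the combined martingale does not increase along the $0$-extension. Then $A$ is $\Delta^0_2$, avoids $M$, and is $1$-random; Theorem~\ref{nies_and_stephan} (with $B=C=A$) now gives $A'\le_T\emp'$. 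Lowness is thus deduced \emph{after} the construction from randomness of the set built, and $\emp'$ is never asked to decide anything about the jump.

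Your approach, by contrast, cannot work as stated, and the obstacle is not merely technical bookkeeping. You explicitly reduce to the assertion that \emph{every $\Pi^0_1(\emp')$ class of positive measure has a low member}, and your forcing argument uses nothing about $C$ beyond this. But that assertion is false. Take $P=\overline{U_1}$ where $\{U_n\}_{n\ge 1}$ is a universal Martin-L\"of test relative to $\emp'$: then $P$ is $\Pi^0_1(\emp')$ with $\mu(P)\ge 1/2$, yet every $X\in P$ is $2$-random (any non-$2$-random lies in $\bigcap_n U_n\subseteq U_1$), so no member of $P$ is even $\Delta^0_2$, let alone low. This explains precisely why the ``crux'' you identify cannot be resolved: in the divergence branch you need $\emp'$ to certify a \emph{lower} bound on $\mu(Q_e)$, but measures of $\Pi^0_1(\emp')$ classes are only \emph{upper}-semicomputable in $\emp'$, and no arrangement of thresholds can turn a $\Pi^0_1(\emp')$ fact into a $\Sigma^0_1(\emp')$ one. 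The specific class $C$ you start from \emph{does} contain a low set, but establishing this seems to require the randomness idea above rather than a generic forcing over $\Pi^0_1(\emp')$ conditions.
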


\noindent In fact, the proof of the above theorem gives the stronger result that the class of $\Delta^0_2$ sets not having an infinite low subset or cosubset is $\Delta^0_2$ null.  It follows that ``most'' computable stable colorings do not satisfy Theorem \ref{thm:no_low}.

We will need a more uniform version of the above theorem, which we present in the form of the proposition below, in our proof of Theorem \ref{dam2} in Section \ref{sec_asram}.  It will rely on the following three facts.  The first is the existence of a universal oracle c.e.\ martingale, i.e., of a real-valued martingale $U$ such that for all sets $X$,  $\{x \in \Q: x <U^X(\sigma)\}$ is $X$-c.e.\ uniformly in $\sigma$, and $S[U^X] = \{B \in 2^{\omega}: B \text{ not $X$-random}\}$ (see, e.g., \cite{DH}, Corollary 5.3.5).  By the proof of Proposition 1.5.5 in \cite{Tt}, we can fix a $u \in \omega$ so that for all $X$, $\Phi^{X'}_u$ is a rational-valued martingale with $S[\Phi^{X'}_u] \supseteq S[U^X]$.  The second, which we will use repeatedly in the sequel, is van Lambalgen's theorem (see \cite{DH}, Theorem 5.9.1), which states that a set is 1-random if and only if its odd and even halves are relatively 1-random.
And the third fact, due to Nies and Stephan (unpublished, see \cite{DHMN}, Theorem 3.4), is the following theorem.  Recall that if $\{C_s\}_{s \in \omega}$ is a computable approximation of a $\Delta^0_2$ set, its \emph{modulus of convergence} of is the function $m(x) = (\mu s)(\forall t \geq s)[C_s(x) = C_t(x)]$.  We write $\varphi^X_e$ for the use of a computation $\Phi^X_e$.

\begin{thm}[Nies and Stephan]\label{nies_and_stephan}
Let $C$ and $B$ be sets such that $C$ is $\Delta^0_2$ and $B$-random (i.e., 1-random relative to $B$).  If $m$ is the modulus of convergence of a computable approximation of $C$, then $\varphi^B_x(x) \leq m(x)$ for all large enough $x$ such that $\Phi^{B}_x(x) \downarrow$.  In particular, since $m \leq_T \emp'$, $B$ is GL$_1$ (i.e., $B' \leq_T B \oplus \emp'$).
\end{thm}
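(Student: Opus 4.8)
The plan is to prove the first assertion by contradiction and to deduce the second from it. So suppose
\[
S=\{x:\Phi^B_x(x)\downarrow\ \text{and}\ \varphi^B_x(x)>m(x)\}
\]
is infinite; the argument will show that $C$ is then not $B$-random, contradicting the hypothesis.

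Fix the computable approximation $\{C_s\}_{s\in\w}$ whose modulus of convergence is $m$, and set $P=\{x:\Phi^B_x(x)\downarrow\}$. For $x\in S$, writing $u_x=\varphi^B_x(x)$, the inequality $u_x>m(x)$ says exactly that the approximation of $C(x)$ has already reached its limit by stage $u_x-1$, so $C(x)=C_{u_x-1}(x)$; and $u_x$, hence this value, is computable from $B$, uniformly on the $B$-c.e.\ set $P$, by searching for the halting computation and reading off its use. Moreover, once $x\in P$ and $u_x$ have been found, the condition that $C_s(x)$ changes for some $s\geq u_x-1$ is computably enumerable, and it holds exactly when $x\notin S$. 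Thus, enumerating $P$ relative to $B$ while watching the approximation and discarding an index as soon as such a change appears, the surviving indices form a $B$-computable sequence of finite sets that converges — coordinatewise and eventually monotonically from above — to exactly $S$. Since $S$ is infinite, this furnishes, in the limit, the correct value $C(x)$ for every $x$ in an infinite set.

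From this I would build, relative to $B$, a Martin-L\"of test $\{U_n\}$ with $C\in\bigcap_n U_n$; as $C$ is $B$-random this is a contradiction, so $S$ must be finite, which is the first assertion. The natural attempt is to let $U_n$ accumulate the clopen set determined by the currently conjectured first $n$ elements of $S$ — the first $n$ surviving indices — together with their current approximate values: because $S$ is infinite this conjecture eventually stabilises, and in the limit the clopen set has measure $2^{-n}$ and contains $C$. The delicate part — and what I expect to be the main obstacle — is the measure bookkeeping. Membership in $S$ is only co-$B$-c.e., so the conjecture may be revised a large (but finite) number of times before settling, and a naive enumeration over-counts measure; this is especially acute when $S$ is sparse, since then the first few ``elements of $S$'' can be revised far more than $2^n$ times. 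Handling it calls for a device that spreads these revisions across infinitely many test levels and then repairs the resulting bound by a $B$-computable reindexing of the test, and getting this to come out right is the crux of the argument.

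For the second assertion, fix $N$ so that $\varphi^B_x(x)\leq m(x)$ whenever $x\geq N$ and $\Phi^B_x(x)\downarrow$. To decide $B'$ from $B\oplus\emp'$: the finitely many $x<N$ are handled by hardcoding $B'\res N$; for $x\geq N$, compute $m(x)$ from $\emp'$ (recall $m\leq_T\emp'$) and then the finite string $B\res m(x)$ from $B$, and observe that $\Phi^B_x(x)\downarrow$ if and only if $\Phi_x(x)$ halts under the simulation that answers each oracle query below $m(x)$ from $B\res m(x)$ and treats any query at or above $m(x)$ as a signal of divergence. One direction is trivial, and the other is precisely the modulus bound just established, which forces any halting $\Phi^B_x(x)$ to have use at most $m(x)$. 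The displayed condition is $\Sigma^0_1$ in the finite data $(x,m(x),B\res m(x))$, hence decidable from $\emp'$; so $B'\leq_T B\oplus\emp'$, i.e.\ $B$ is GL$_1$.
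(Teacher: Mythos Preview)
The paper does not prove this theorem: it is stated as a cited result of Nies and Stephan (with reference to \cite{DHMN}, Theorem~3.4) and used as a black box in the proof of Proposition~\ref{lem:uniform}. So there is no ``paper's own proof'' to compare your attempt against.

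On its own merits, your proposal has the right architecture for the first assertion --- assume $S$ infinite, observe that for $x\in S$ the value $C(x)$ is recoverable $B$-computably from $u_x$, and aim for a $B$-ML test covering $C$ --- but it does not actually carry out the decisive step. You yourself identify the difficulty: the set of ``first $n$ survivors'' can be revised a number of times that depends on $s_n$ (the $n$th element of $S$), not on $n$ alone, so the naive $U_n$ has measure bounded only by something like $s_n\cdot 2^{-n}$, which need not tend to~$0$. Your proposed remedy --- ``spread these revisions across infinitely many test levels and then repair the bound by a $B$-computable reindexing'' --- is not a proof, and in fact it is not clear it can work as described: reindexing an ML test requires a $B$-computable upper bound on $\mu(U_n)$, but the number of revisions depends on when the $\Pi^0_1$ condition ``$x$ is never kicked out'' stabilises below $s_n$, and $s_n$ is not $B$-computable (indeed $S$ is only $\Sigma^0_1(B)\cap\Pi^0_1$). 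So as written, this is a genuine gap: the ``crux'' is named but not supplied, and the hinted fix does not obviously close it. The published proofs (e.g.\ in \cite{DHMN}) avoid tracking $S$ directly and instead control the measure by a different bookkeeping device; you would need to supply something of that kind.

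Your derivation of the GL$_1$ conclusion from the first assertion is correct: given the use bound $\varphi^B_x(x)\le m(x)$ for $x\ge N$, the predicate ``$\Phi^{B\res m(x)}_x(x)\!\downarrow$ with all queries below $m(x)$'' is $\Sigma^0_1$ in the finite data $(x,m(x),B\res m(x))$, hence decidable from $B\oplus\emptyset'$, and this agrees with $B'(x)$ for $x\ge N$.
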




Recall that a {\em $\Delta^0_2$ index} for a $\Delta^0_2$ set $A$ (or, more generally, for a partial $\emp'$-computable function $f$) is an $i \in \omega$ such that $A =  \Phi_i^{\emp'}$ ($f =  \Phi_i^{\emp'}$).  A {\em lowness index} for a low set $L$ is a $\Delta^0_2$ index for $L'$.  We draw attention to our use of $\Phi^X_{e,s}(x)$ to indicate a computation with oracle $X$ run for $s$ steps on input $x$, versus our use of $\Phi^X_{e}(x)[s]$ to indicatee the computation  $\Phi^{X_s}_{e,s}(x)$ under the assumption of a fixed computable approximation (or enumeration) $\{X_s\}_{s \in \omega}$ of $X$.  In particular, determining whether $\Phi^X_{e,s}(x)$ converges is $X$-computable, while for $\Phi^{X_s}_{e,s}(x)$ it is computable.  We fix a computable enumeration $\{\emp'_s\}_{s \in \omega}$ of $\emp'$.

\begin{prop}\label{lem:uniform}
There exists a $\emp''$-computable function $f$ such that for every $e, i \in \omega$, if $\Phi^{\emp'}_e$ is total and a martingale, and if $i$ is a lowness index for some set $L$, then there is a set $B \notin S[\Phi^{\emp'}_e]$ such that $f(e,i)$ is a lowness index for $L \oplus B$.\end{prop}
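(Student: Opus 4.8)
The plan is to build the set $B$ by a $\emp'$-oracle construction that simultaneously (i) diagonalizes against $\Phi^{\emp'}_e$ succeeding on $B$ and (ii) keeps $L \oplus B$ low, all in a way that the entire construction can be carried out $\emp'$-effectively from the pair of indices $(e,i)$, so that a $\emp''$-index for $(L\oplus B)'$ can be extracted uniformly. The natural tool for (i) is the universal oracle c.e.\ martingale machinery recalled before the statement: defeating an arbitrary $\Delta^0_2$ martingale on a single real is just a martingale-diagonalization, and we have a $\Delta^0_2$ index $u$ with $S[\Phi^{X'}_u] \supseteq S[U^X]$, so it suffices to choose $B$ so that $B$ is $L$-random (equivalently, by van Lambalgen, each half of $B$ is random relative to $L$ and the other half), since then $B \notin S[U^L] \supseteq$ the success set of the given $\Phi^{\emp'}_e$ once we feed it through the universal martingale. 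Concretely: given the martingale $\Phi^{\emp'}_e$, we may uniformly (relative to $\emp''$, which knows whether $\Phi^{\emp'}_e$ is total) pass to a real-valued c.e.\ martingale dominating it, and then pick $B$ computing a path that avoids its success set while remaining $L$-random; a random real relative to $L$ exists and, more importantly, we can pick a \emph{specific} one whose jump we control.

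The key structural point for (ii) is Theorem~\ref{nies_and_stephan}: if $C$ is $\Delta^0_2$ and $B$-random, then $B$ is GL$_1$, i.e.\ $B' \equiv_T B \oplus \emp'$. I want to apply this with the roles reversed — I want $B$ to be $L$-random while being itself $\Delta^0_2$ — so that $B$ is GL$_1$ and hence $(L \oplus B)' \leq_T L \oplus B \oplus \emp' \equiv_T L' \oplus B$ (using that $L$ is low, so $L' \equiv_T \emp'$ and thus $L \oplus \emp' \equiv_T \emp'$-ish up to $B$). Thus the jump of $L \oplus B$ is computed from $B \oplus \emp'$. So the plan is: arrange $B$ to be a $\Delta^0_2$ set that is $L$-random and avoids $S[\Phi^{\emp'}_e]$; then $(L\oplus B)' \leq_T B \oplus \emp'$, and since $B$ is $\Delta^0_2$ this is a $\Delta^0_2$ set, and we need its $\Delta^0_2$ index to come out uniformly from $(e,i)$ via some fixed $\emp''$-computable function $f$. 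Producing such a $B$ $\Delta^0_2$-effectively: run a $\emp'$-computable construction that builds $B \res n$ stage by stage, at each stage extending so as to keep the normalized value of the universal $L$-martingale bounded (possible because $L' \leq_T \emp'$, so the $L$-martingale is $\emp'$-computable and we can search $\emp'$-effectively for a safe extension), while also keeping the value of $\Phi^{\emp'}_e$ bounded along $B$; the standard argument (as in the proof of Theorem~\ref{thm:ht}) shows both requirements can be met because a martingale cannot have large value on too large a fraction of extensions.

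The uniformity — obtaining the single $\emp''$-computable $f$ — is where I expect the real work to be, and it is also where $\emp''$ rather than $\emp'$ is needed: deciding whether $\Phi^{\emp'}_e$ is total and a martingale, and whether $i$ is a genuine lowness index (i.e.\ $\Phi^{\emp'}_i$ is total and is the jump of a low set), are $\Pi^0_2$-type conditions, hence $\emp''$ can sort them out and, when they hold, $\emp''$ can produce a $\Delta^0_2$ index for the reduction $(L \oplus B)' \leq_T B \oplus \emp'$ together with a $\Delta^0_2$ index for $B$ itself, and compose these into a $\Delta^0_2$ index for $(L\oplus B)'$, i.e.\ a lowness index for $L\oplus B$. (When the hypotheses fail, $f(e,i)$ can be anything.) The main obstacle is bookkeeping: one must check that the construction of $B$ is uniformly $\emp'$-computable given $\Delta^0_2$ indices for $L'$ and (a dominating martingale for) $\Phi^{\emp'}_e$, that the use-bounds in the GL$_1$ argument of Theorem~\ref{nies_and_stephan} are uniform, and that the resulting index for $(L\oplus B)'$ depends only on $(e,i)$ through fixed primitive-recursive index manipulations — so that a single $\emp''$-computable $f$ works for all $(e,i)$ at once. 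Everything else is an adaptation of the Hirschfeldt–Terwijn argument for Theorem~\ref{thm:ht} together with the Nies–Stephan GL$_1$ fact.
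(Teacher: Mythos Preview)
Your application of Theorem~\ref{nies_and_stephan} is misdirected, and this is a genuine gap. The theorem says: if $C$ is $\Delta^0_2$ and $B$-random, then the \emph{oracle} $B$ is $\mathrm{GL}_1$. From ``$B$ is $\Delta^0_2$ and $L$-random'' you may therefore conclude only that $L$ is $\mathrm{GL}_1$ (which you already knew), not that $B$ is. Indeed the implication you want is simply false: Chaitin's $\Omega$ is $\Delta^0_2$ and $1$-random (so $L$-random for $L=\emp$), yet $\Omega \equiv_T \emp'$ is not $\mathrm{GL}_1$. And even if $B$ were $\mathrm{GL}_1$, that alone would not make $L \oplus B$ $\mathrm{GL}_1$, which is what you actually need for $(L\oplus B)' \leq_T \emp'$.

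What is missing is a $\Delta^0_2$ set $C$ that is random relative to $L \oplus B$, not merely relative to $L$; Theorem~\ref{nies_and_stephan} then gives $L \oplus B$ $\mathrm{GL}_1$, hence low. The paper supplies exactly this by building not $B$ alone but a single $\Delta^0_2$ set $A = B \oplus C$ that is $L$-random: by van Lambalgen relativized to $L$, the odd half $C$ is then $(L\oplus B)$-random, while the even bits are steered so that $B \notin S[\Phi^{\emp'}_e]$, via the combined martingale $M(\sigma) = \tfrac{1}{2}\bigl(\Phi^{\emp'}_e(\widetilde{\sigma}) + \Phi^{L'}_u(\sigma)\bigr)$ with $\widetilde{\sigma}$ the even-position substring of $\sigma$. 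Your van Lambalgen parenthetical gestures in this direction, but the essential point is that the set avoiding $\Phi^{\emp'}_e$ must be one \emph{half} of the $L$-random real, with the other half left over to witness lowness of the join; your sketch never produces that second half. (A smaller slip: your claim $S[\Phi^{\emp'}_e] \subseteq S[U^L]$ is also false, since $\Phi^{\emp'}_e$ is $L'$-computable rather than $L$-c.e., so $L$-randomness alone does not defeat it; you do need the separate diagonalization you mention later, and the paper folds it directly into $M$.)
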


\begin{proof}
Fix $e,i \in \omega$ and let $u \in \omega$ be as described above.  We define a  partial $\emp'$-computable function $M : 2^{<\omega} \to \Q^{\geq0}$.  Given $\sigma \in 2^{<\omega}$, let $\widetilde \sigma$ be either $\lambda$ if $\sigma = \lambda$, or $\sigma(0)\sigma(2)\cdots\sigma(2m)$ if $\sigma$ has length $2m+1$ or $2m+2$ for some $m \geq 0$.  
If there exist $q, r \in \Q^{\geq 0}$ and $\tau \in 2^{<\omega}$ such that
\begin{enumerate}
\item $\Phi^{\emp'}_{e}(\widetilde{\sigma})\downarrow = q$,
\item $\Phi^{\emp'}_{i} (x) \downarrow = \tau(x)$ for all $x < |\tau|$ and $\Phi^\tau_u(\sigma) \downarrow = r,$
\end{enumerate}
then let $M(\sigma) = \frac{1}{2}(q+r)$, and otherwise let $M(\sigma)$ be undefined.  It is not difficult to see that $M$ satisfies the averaging condition (\ref{cond:avg}) where defined.

We next define $\{0,1\}$-valued partial $\emp'$-computable functions $A$, $B$, and $C$ as follows.  Given $x$, let
$$A(x) =
\left \{
\begin{array}{ll}
0 & \text{if } M((A \res x)\,0) \downarrow \; \leq M(A \res x) \downarrow\\
1 & \text{if } M((A \res x)\,0) \downarrow \; > M(A \res x) \downarrow\\
\uparrow & \text{otherwise}
\end{array}
\right . .
$$
Then let $B(x) = A(2x)$ and $C(x) = A(2x+1)$ for all $x$, and let $c$ be a $\Delta^0_2$ index for $C$.  Finally, define also $m_C(x) = (\mu s)(\forall t \geq s)[\Phi^{\emp'}_c(x)[t] \downarrow = \Phi^{\emp'}_c(x)[s]\downarrow].$

Notice that if $\Phi^{\emp'}_e$ is a total martingale and $\Phi^{\emp'}_i$ is (the characteristic function of) the jump of some set $L$, then $M$ is a $\Delta^0_2$ martingale whose success set includes that of $\Phi^{L'}_u$, and $A$ is a $\Delta^0_2$ set on which $M$ does not succeed.  We then also have that $A = B \oplus C$, and it is readily seen from the definition of $M$ that $B \notin S[\Phi^{\emp'}_e]$. Now because $A \notin S[M]$, $A$ must be $L$-random, and so by van Lambalgen's theorem relative to $L$, $C$ must be $L\oplus B$-random.  Moreover, $m_C$ is in this case the modulus of convergence for the computable approximation $\{C_s\}_{s \in \omega}$ of $C$ defined by $C_s(x) = i$ if $\Phi^{\emp'}_{c}(x)[s] \downarrow = i$ and $C_s(x) = 0$ otherwise.  Hence, by Theorem \ref{nies_and_stephan} (with $L \oplus B$ in place of $B$), there must be an $n$ so that for all $x \geq n$, whenever $\varphi^{L \oplus B}_x(x)$ is defined it is bounded by $m_C(x)$.

Now to define $f(e,i)$, choose $j \in \omega$ so that $\Phi^{X'}_j = X$ for all sets $X$, and let  $h$ be a computable function so that for all $x \in \omega$, $x \in X$ if and only if $h(x) \in X'$.  Using a $\emp''$ oracle, we search for the first of the following to occur:
\begin{enumerate}
\item $\Phi^{\emp'}_e$ is undefined or does not satisfy the averaging condition (\ref{cond:avg}) on some string,

\item $\Phi^{\emp'}_i$ is undefined on some number,

\item there exist a $\sigma \in 2^{<\omega}$ and an $x < |\sigma|$ such that $\Phi_i^{\emp'}(h(y)) \downarrow = \sigma(y)$ for all $y < |\sigma|$,
and either $\Phi_x^{\sigma}(x) \downarrow$ and $\Phi^{\emp'}_i(x) \downarrow = 0$, or else $\Phi_x^{\tau}(x) \uparrow$ for all $\tau \supseteq \sigma$ and $\Phi^{\emp'}_i(x) \downarrow = 1$,

\item there is an $n \in \omega$ so that for all $\sigma, \tau$ of the same length and all $x \geq n$, if
\begin{enumerate}
\item $\Phi_i^{\emp'}(h(y)) \downarrow = \sigma(y)$ for all $y < |\sigma|$,
\item $B(y) \downarrow = \tau(y)$ for all $y < |\tau|$,
\item $\Phi^{ \sigma \oplus \tau}_x(x) \downarrow$ and $m_C(x) \downarrow$,
\end{enumerate}
then $\varphi^{\sigma \oplus \tau}_x(x) \leq m_C(x)$.

\end{enumerate}
This search necessarily terminates, for if (1), (2), and (3) above do not obtain, then we are precisely in the situation of the preceding paragraph, so (4) must obtain as discussed there.  If (1), (2), or (3) occur, let $f(e,i) = 0$.  Otherwise, choose the least $n$ witnessing the occurrence of (4) and let $f(e,i)$ be a $\Delta^0_2$ index, found according to some fixed effective procedure, for the following function.  On input $x$, the function waits for $m_C(x)$ to converge, then chooses the smallest $y \geq n$ such that $\Phi^{X}_x = \Phi^X_y$ for all sets $X$ and searches for the first $\sigma, \tau$ of the same positive length so that (a) and (b) in (4) above hold.  It then outputs $1$ or $0$ depending as $\Phi^{\sigma\oplus\tau}_y(x) \downarrow$ with use bounded by $m_C(x)$ or not.
\end{proof}


\section{Almost s-Ramsey degrees}\label {sec_sram}

In \cite[Sections 4 and 5]{CJS}, Cholak, Jockusch, and Slaman give two proofs of Theorem \ref{thm_cjs} for the stable case, but neither of them is uniform over the stable colorings (see the discussion at the beginning of Section 12.3 of \cite{CJS}), and similarly in the case of the proof of Theorem \ref{thm_huge}.  To address whether such nonuniformities were essential, Mileti introduced the following class of degrees:


\begin{defn}[Mileti \cite{M}, Definition 5.1.2]\label{defn_sRamseydeg}
A Turing degree $\bf d$ is {\em s-Ramsey} if every $\Delta^0_2$ set has an infinite subset or cosubset of degree at most $\bf d$.
\end{defn}

\noindent Obviously, an s-Ramsey degree can also be defined as one which bounds the degree of a homogeneous set for every computable stable coloring.  Thus, the following results imply that Theorems \ref{thm_cjs} and \ref{thm_huge} do not have uniform proofs.

\begin{thm}[Mileti \cite{M}, Theorem 5.3.7 and Corollary 5.4.6]\label{thm:joelims}
\
\begin{enumerate}
\item The only $\Delta^0_2$ s-Ramsey degree is $\0'$.
\item There is no low$_2$ s-Ramsey degree.
\end{enumerate}
\end{thm}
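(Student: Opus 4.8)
The plan is to handle both clauses by one strategy. First, $\0'$ is s-Ramsey: if $A$ is $\Delta^0_2$ then one of $A$, $\omega\setminus A$ is infinite and of degree at most $\0'$, hence is an infinite subset, respectively cosubset, of $A$ of degree at most $\0'$. So (1) reduces to showing no $\Delta^0_2$ degree $\bf d\neq\0'$ is s-Ramsey, and (2) to showing no low$_2$ degree is s-Ramsey. Fix in each case a set $D$ of degree $\bf d$. In case (1), $D\leq_T\emp'$ and $\emp'\nleq_T D$; in case (2), $\emp'\nleq_T D$ as well, since $\emp'\leq_T D$ would give $\emp'''\leq_T D''\equiv_T\emp''$. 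So in both cases $\emp'\nleq_T D$, and it suffices to build, using $D$, a $\Delta^0_2$ set $A$ with no infinite subset or cosubset of degree at most $\bf d$: an infinite subset of $A$ of degree at most $\bf d$ would be an infinite $D$-computable, hence $D$-c.e., subset of $A$, and likewise for cosubsets, so it is enough to satisfy for each $e$
\[
R_e:\quad \text{if } \Phi^D_e \text{ is the characteristic function of an infinite set } X,\ \text{then } X\cap A\neq\emptyset\ \text{and}\ X\setminus A\neq\emptyset.
\]

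For (1), since $D\leq_T\emp'$ the oracle $\emp'$ can compute $D$ and run a priority construction: $R_e$ waits until it sees, through the approximation to $\emp'$ (and hence to $D$), two fresh large elements $x<y$ with $\Phi^D_e(x)=\Phi^D_e(y)=1$, then commits $x$ to $\omega\setminus A$ and $y$ to $A$, re-acting with fresh witnesses if the approximation later retracts this; witnesses are always chosen above all currently restrained numbers, so by induction on priority each $R_e$ eventually settles and $A\in\Delta^0_2$. The place where $\emp'\nleq_T D$ enters is that the construction must not collapse to $D$: if $D\equiv_T\emp'$ it provably cannot satisfy all the $R_e$, since then $A$ itself (or $\omega\setminus A$) would be an infinite $D$-computable set inside $A$ (or $\omega\setminus A$), and the requirement matching its index would fail. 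Using $\emp'\nleq_T D$ one arranges instead that $A$, and even the set of witnesses placed into $\omega\setminus A$, is not computable from $D$, and then the inductive verification that every $R_e$ is met goes through; so $A$ witnesses that $\bf d$ is not s-Ramsey.

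For (2), $D$ need not lie below $\emp'$, so $\emp'$ cannot run that construction directly. Here low$_2$-ness does the work: $D\leq_T D''\equiv_T\emp''$ gives a $\emp'$-computable approximation $\{D_s\}$ to $D$, and low$_2$-ness makes it uniform enough that the questions ``$\Phi^D_e$ is the characteristic function of an infinite set'' can be followed by a $\emp'$-effective guessing procedure with a well-defined, finite true path. I would reorganize the construction on a tree of strategies, a node of length $n$ guessing for each $e<n$ whether the set coded by $\Phi^D_e$ is infinite; a node on the true path eventually receives correct, stable information from the $D_s$ and acts as in (1). Since the tree, the guesses, and the $D_s$ are all $\emp'$-computable, $A=\lim_s A_s$ is again $\Delta^0_2$, and along the true path every $R_e$ is met (again using $\emp'\nleq_T D$ to exclude the degenerate outcome); so $A$ witnesses that $\bf d$ is not s-Ramsey.

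I expect the main obstacle in (1) to be exactly that tension: the naive ``find two elements and move them'' construction is computable in $D$ and hence self-defeating, so one must use $\emp'\nleq_T D$ essentially, both to keep $A$ (and the witness set) off the degree $\bf d$ and to push the inductive verification of the $R_e$ through. In (2) there is a further, and I expect harder, obstacle: the stage-$s$ approximations to the sets coded by $\Phi^D_e$ need not stabilize, so ordinary finite injury is unavailable, and it is precisely low$_2$-ness of $\bf d$ that has the right strength to convert the relevant $\Sigma^0_2(D)$ facts into something a $\emp'$-effective tree of guesses, with a finite true path, can track.
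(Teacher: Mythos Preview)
The paper does not prove this theorem; it is cited from Mileti's thesis without argument. Part~(1) does follow from the paper's Theorem~\ref{dam1} (every s-Ramsey degree is trivially almost s-Ramsey), which is proved in full, but part~(2) is not argued anywhere in the paper.

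Your framework is right---given $D$ with $\emptyset'\nleq_T D$, build a $\Delta^0_2$ set $A$ meeting the requirements $R_e$---but you never say \emph{how} the hypothesis enters. For (1) you write that ``using $\emptyset'\nleq_T D$ one arranges'' that $A$ is not $D$-computable and that then the verification ``goes through,'' but this is not an arrangement, and your diagnosis of the obstruction (``if $D\equiv_T\emptyset'$ it provably cannot satisfy all the $R_e$'') merely restates the problem. The priority construction you sketch is a $\emptyset'$-computable procedure that never invokes $D<_T\emptyset'$; if its verification were valid as stated it would apply equally to $D=\emptyset'$. The missing device---and the one the paper uses to prove the stronger Theorem~\ref{dam1}---is domination: fix a total increasing $f\leq_T\emptyset'$ not dominated by any function of degree strictly below $\mathbf{0}'$ (the modulus of $\emptyset'$ works), and bound the stage-$t$ search for elements of $\Phi^D_e$ by $f(t)$. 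When $\Phi^D_e$ codes an infinite set, the $D$-computable function recording how long one must wait for its next element is infinitely often below $f$, so the bounded search succeeds infinitely often; this is precisely where $D<_T\emptyset'$ is consumed.

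For (2) the sketch is thinner still. That $D$ low$_2$ yields a $\emptyset'$-computable approximation to $D$ and makes ``$\Phi^D_e$ is total and infinite'' $\emptyset'$-approximable is correct, but a $\emptyset'$-approximation to $D$ does not let $\emptyset'$ reliably locate elements of $\Phi^D_e$: the approximation may change on the use of any given computation unboundedly often, and your outline provides nothing to bound or absorb those changes. You have named plausible ingredients but not the combinatorics that make them cohere; Mileti's argument for this part is considerably more delicate than a routine tree-of-strategies construction, and your sketch does not get past the point where the real work begins.
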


With the definition of $\Delta^0_2$ nullity in hand, we can generalize s-Ramsey degrees by passing from the class of all $\Delta^0_2$ sets to subclasses of it which are not $\Delta^0_2$ null.


\begin{defn}\label{defn_asRamseydeg}
A Turing degree $\bf d$ is {\em almost s-Ramsey} if the collection of $\Delta^0_2$ sets with an infinite subset or cosubset of degree at most $\bf d$ is not \mbox{$\Delta^0_2$ null}.
\end{defn}

\noindent We obtain the same class of degrees in the above definition whether we insist on considering cosubsets or not. For if a martingale $M$ succeeds on the class of all $\Delta^0_2$ sets having an infinite subset of degree at most $\bf d$, then the martingale $M + N$, where $N(\sigma) = M( (1- \sigma(0))(1-\sigma(1))\cdots(1-\sigma(|\sigma| - 1)))$ for all $\sigma$,  succeeds on the class of all $\Delta^0_2$ sets having an infinite such subset or cosubset.  This is in stark contrast to Definition \ref{defn_sRamseydeg} even if we deal only with infinite, coinfinite $\Delta^0_2$ sets, as it is easy to construct such a set so that all of its infinite subsets compute $\emp'$ (in fact, for any infinite set $A$, if $B$ is the set of all prefixes of $A$ under some fixed computable bijection of $2^{<\omega}$ with $\omega$, then each infinite subset of $B$ computes $A$).

The preceding definition was suggested by D. Hirschfeldt, who asked whether Mileti's results still hold if s-Ramsey degrees are replaced by the weaker almost s-Ramsey degrees, and more generally, whether the two classes of degrees are the same.  Theorem \ref{dam1}, stated in Section \ref{sec_intro} and restated in terms of almost s-Ramsey degrees below, is an affirmative answer with regards to the analog of Theorem \ref{thm:joelims}~(1).  We discuss the other questions, and give a separation of s-Ramsey and almost s-Ramsey degrees, in the next section.


\begin{thm1}
The only $\Delta^0_2$ almost s-Ramsey degree is $\0'$.
\end{thm1}

\begin{proof}
Fix a set $D <_T \emptyset'$.  For each $e \in \omega$, we construct uniformly in $\emp'$ a martingale $M_e$ so as to satisfy the requirement
$$
\begin{array}{lll}
R_e : (\exists^\infty x)(\forall y \leq x)[\Phi^D_e (y) \downarrow \in\{0,1\} \wedge \Phi^D_e(x) = 1] \to (\forall A \supseteq \Phi^D_e)[A \in S[M_e] ].
\end{array}
$$
By Theorem \ref{thm:lutzbig}~(3)---letting $\mathscr{C}_e$ there be $\{A : A \supseteq \Phi^D_e\}$ if $\Phi_e^D$ is a characteristic function and $\emp$ otherwise---this will ensure that the collection of sets containing an infinite subset computable in $D$ is $\Delta^0_2$ null, and hence, by the remarks following Definition \ref{defn_asRamseydeg}, that $\deg(D)$ is not almost s-Ramsey.

Fix a total increasing function $f \leq_T \emp'$ not dominated by any function of degree strictly below $\0'$.  We define $M_e$ by stages, at stage $s$ defining $M_e$ on all strings of length $t$ for a specific $t \geq s$.

\medskip
\noindent \mbox{{\em Stage $s=0$.}}  Let $M_e(\lambda) = 1$.

\medskip
\noindent \mbox{{\em Stage $s+1$.}}  Assume $M_e$ has been defined on all strings of length $t$ for some $t \geq s$.  Search $\emp'$-computably for a string $\tau \subseteq D$ and a number $x \geq t$ such that $|\tau|, x \leq f(t)$ and \mbox{$\Phi^{\tau}_{e,|\tau|}(x) \downarrow = 1$}.  If the search succeeds, choose the least $x$ for which it does so.  Then for each $\sigma \in 2^{<\omega}$ of length $t$, and for all $\tau \supset \sigma$ with $|\tau| \leq x + 1$, define
$$
M_e(\tau) = \left\{
\begin{array}{ll}
M_e(\sigma) & \text{if } |\tau| \leq x\\
2M_e(\sigma) & \text{if } |\tau| = x+1 \wedge \tau(x) = 1\\
0 & \text{if } |\tau| = x+1 \wedge \tau(x) = 0
\end{array}
\right. .
$$
Otherwise, set $M_e(\sigma0) = M_e(\sigma1) = M_e(\sigma)$ for all $\sigma$ of length $t$.

\medskip
It is clear that the construction succeeds in defining $M_e$ on all of $2^{<\omega}$.
To verify that $R_e$ is met, suppose that $\Phi^D_e$ is the characteristic function of an infinite set.  Then the function
$$g(y) = (\mu s)(\exists x \geq y)(\forall z < x)[\Phi^D_{e,s} (x) \downarrow = 1 \wedge (y \leq z \to \Phi^D_{e,s}(z) \downarrow = 0)]$$
is total and computable in $D$, so by choice of $f$ there must exist infinitely many $y$ such that $g(y) \leq f(y)$.  Fix $A \supseteq \Phi^D_e$ and suppose that at the end of some stage $s'$ of the construction, $M_e(A \res t)$ for some $t \geq 0$ is defined and positive, while \mbox{$M_e(A \res t+1)$} is not yet defined.  Choose the least $y \geq t$ such that $g(y) \leq f(y)$.  If $f$ is replaced by $g$ in the search performed at each stage of the construction, then the search always succeeds, so it must necessarily succeed at some stage $s > s'$.
Fix the least such $s$.  Then by construction, at every stage between $s'$ and $s$, $M_e$ gets defined only on the successors of the longest strings it was defined on at the previous stage, and it is given the same value on these successors.  In particular, at the beginning of stage $s$, we have that $M_e$ is defined on $A \res t + (s - s') - 1$ at the start of stage $s$, and $M_e(A \res t + (s - s') - 1) = M_e(A \res t)$.  By choice of $s$, there exists a string $\tau \subseteq D$ and a number $x \geq t + (s-s') -1$ such that $|\tau|, x \leq f(t)$ and \mbox{$\Phi^{\tau}_{e,|\tau|}(x) \downarrow = 1$}.  Then at stage $s$, $M_e$ gets defined on $A \res x+1$ with $M_e(A \res y) = M_e(A \res t)$ for all $y \leq x$ and, since $A(x) = \Phi^D_e(x) = 1$, $M_e(A \res x+1) = 2M_e(A \res t)$.  Since $x + 1 > t$, it follows that $\limsup_n M_e (A \res n) = \infty$.
\end{proof}


We illustrate an application of the preceding theorem by briefly looking at the Muchnik degrees of classes of infinite subsets and cosubsets of $\Delta^0_2$ sets. Recall that if $\mathscr A$ and $\mathscr B$ are classes of sets, we say $\mathscr{A}$ is Muchnik (or weakly) reducible to $\mathscr{B}$, and write $\mathscr{ A} \leq_w \mathscr{B}$, if every element of $\mathscr B$ computes an element of $\mathscr A$; if also $\mathscr{ B} \leq_w \mathscr{A}$, we write $\mathscr{ A} \equiv_w \mathscr{B}$.  We refer the reader to Binns and Simpson \cite{BS}, Section 1, for additional background.

\begin{defn}
Given a $\Delta^0_2$ set $A$, let $H(A)$ be the collection of all infinite subsets or cosubsets of $A$, and for a class $\mathscr{C}$ of $\Delta^0_2$ sets let $H(\mathscr{C})$ denote the structure $\{H(A) : A \in \mathscr{C}\}$ under $\leq_w$.  Given a computable stable coloring $f$, let $H(f)$ be the collection of all infinite homogeneous sets of $f$.
\end{defn}

Clearly, for each $\Delta^0_2$ set $A$ there is a computable stable $f$ with $H(A) \equiv_w H(f)$, and conversely.  Thus, we may use the two notions interchangeably here.

\begin{prop}\label{prop_joins}
$H(\Delta^0_2)$ is a lower semilattice.
\end{prop}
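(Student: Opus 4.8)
The plan is to show that the partial order $(H(\Delta^0_2), \leq_w)$ has a greatest element and binary meets. For the greatest element, note that $H(\emp)$ — i.e., the collection of all infinite sets or cosets of $\emp$, which is just all infinite subsets of $\omega$ — is Muchnik-reducible to $H(A)$ for every $\Delta^0_2$ set $A$, since any infinite homogeneous set is in particular an infinite set and thus computes an element of $H(\emp)$ (itself). Conversely $\emp$ is $\Delta^0_2$, so $H(\emp) \in H(\Delta^0_2)$; hence it is the top. The real content is binary meets: given $\Delta^0_2$ sets $A_0$ and $A_1$, I want a $\Delta^0_2$ set $A$ with $H(A) \equiv_w H(A_0)$ and $H(A) \equiv_w H(A_1)$ replaced by: $H(A)$ is the greatest lower bound of $H(A_0)$ and $H(A_1)$ in $H(\Delta^0_2)$.

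The natural candidate is a ``disjoint sum'' coloring. Work with computable stable colorings instead of $\Delta^0_2$ sets, which is legitimate by the remark preceding the proposition. Given computable stable colorings $f_0, f_1$, define $f$ on $\omega = \{0,2,4,\ldots\} \sqcup \{1,3,5,\ldots\}$ by having $f$ restricted to the evens be a copy of $f_0$ (recolored onto a $2$-element block of colors) and $f$ restricted to the odds a copy of $f_1$ (recolored onto a disjoint $2$-element block), with mixed pairs colored so that no infinite homogeneous set can contain infinitely many of both parities — the standard trick is to use colors in $\{0,1\}$ for the even part, $\{2,3\}$ for the odd part, impossible with only $2$ colors. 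So instead use the genuinely correct construction: let $A$ be (a computable coding of) the set whose even part codes $A_0$ and whose odd part codes $A_1$; more precisely take $A$ with $A(2n) = A_0(n)$ and $A(2n+1) = A_1(n)$. Then every infinite subset of $A$ has either an infinite subset of $A_0$'s copy or an infinite subset of $A_1$'s copy (by the pigeonhole principle on parity), and symmetrically for cosubsets, after splitting into the two halves; hence $H(A) \leq_w H(A_0)$ and $H(A) \leq_w H(A_1)$, and conversely any $X$ computing an element of both $H(A_0)$ and $H(A_1)$ computes an element of $H(A)$ (interleave the two), so $H(A) \leq_w H(A_i)$ while $H(A)$ is above the join of any common lower bound — that is, $H(A)$ is the meet.

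The main obstacle is verifying that $H(A)$ really is the \emph{greatest} lower bound, not merely \emph{a} lower bound: one must show that if $\mathscr{B} \leq_w H(A_0)$ and $\mathscr{B} \leq_w H(A_1)$ for some $\mathscr{B} = H(C)$ with $C \in \Delta^0_2$, then $\mathscr{B} \leq_w H(A)$. The point is that from any $G \in H(A)$ one can \emph{uniformly} extract an infinite subset or cosubset of the even half (coding $A_0$) or of the odd half (coding $A_1$) — one of the four cases must give an infinite set — and hence compute an element of $H(A_0) \cup H(A_1) \subseteq$ (things computing elements of $H(A_i)$), which in turn compute elements of $\mathscr{B}$. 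The subtle bookkeeping is that which case occurs is not decided by $G$ alone uniformly, but since $G$ can decide (using no oracle help, just searching) which of the four infinite-set cases holds for itself, the extraction is in fact $G$-uniform, and Muchnik reducibility only requires pointwise (non-uniform) reductions anyway, so even the non-uniformity is harmless. I would close by remarking that this argument does not obviously give a \emph{join}, which is why only ``lower semilattice'' is claimed, and this dovetails with the use of Theorem \ref{dam1} flagged in the surrounding text (the top element $H(\emp)$ and the failure of joins are where the s-Ramsey/almost s-Ramsey distinction enters).
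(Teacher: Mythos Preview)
Your construction---define $A$ by $A(2n)=A_0(n)$, $A(2n+1)=A_1(n)$---is exactly the paper's, and the idea is right, but the write-up has two errors.

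First, the greatest-element discussion is both unnecessary (a meet-semilattice needs only binary meets) and wrong: $H(\emptyset)$ contains computable sets, so it is the \emph{bottom} of the Muchnik order, not the top. Indeed, the corollary immediately following this proposition in the paper is precisely that $H(\Delta^0_2)$ has \emph{no} largest element.

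Second, your Muchnik directions are scrambled. You argue that every $G\in H(A)$ yields, by parity-splitting, an element of $H(A_0)$ or of $H(A_1)$, and conclude ``hence $H(A)\leq_w H(A_0)$ and $H(A)\leq_w H(A_1)$.'' But $H(A)\leq_w H(A_i)$ means every element of $H(A_i)$ computes an element of $H(A)$---the opposite direction. Your splitting argument actually establishes $H(A_0)\cup H(A_1)\leq_w H(A)$, and your ``interleave the two'' remark is irrelevant to either direction of the meet verification. The missing lower-bound step is the trivial one you never state: from $H\in H(A_i)$ (say $H\subseteq A_0$), the set $\{2n:n\in H\}$ lies in $H(A)$, so $H(A)\leq_w H(A_i)$. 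The paper avoids this bookkeeping by proving directly that $H(f)\equiv_w H(f_0)\cup H(f_1)$; since $\mathscr{A}\cup\mathscr{B}$ is always the meet of $\mathscr{A}$ and $\mathscr{B}$ in the Muchnik degrees, once this equivalence is shown there is nothing further to check.
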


\begin{proof}
Given two stable colorings, $f_0$ and $f_1$, we define a third, $f$, such that $H(f) \equiv_w H(f_0) \cup H(f_1)$.  For $x,y \in \omega$, let $f(2x,y)$ equal $f_0(x,z)$ for the least $z$ such that $2z \geq y$, and let $f(2x+1,y)$ equal $f_1(x,z)$ for the least $z$ such that $2z+1 \geq y$. 
It is easy to see that $f$ is stable.

If $H$ is an infinite homogeneous set for $f_0$, respectively for $f_1$, then the set $\{ 2x : x \in H \}$, respectively $\{ 2x+1: x \in H\}$, is homogeneous for $f$, implying that $H(f) \leq_w H(f_0) \cup H(f_1)$.  Conversely, let $H$ be any infinite homogeneous set for $f$ and let $H_0 = \{x : 2x \in H \}$ and $H_1 =  \{x : 2x+1 \in H \}$.  One of $H_0$ and $H_1$, say $H_i$, must be infinite, and this set is clearly computable in $H$ and homogeneous for $f_i$, implying that $H(f_0) \cup H(f_1) \leq_w H(f)$.
\end{proof}

Notice that if there were a largest element in $H(\Delta^0_2)$, it would have an infinite homogeneous set $H <_T \emp'$ by Theorem \ref{thm_huge}.  Then $\deg(H)$ would be an s-Ramsey degree $< {\bf 0'}$, contrary to part (1) of Theorem \ref{thm:joelims}.  This yields the following:

\begin{cor}[Mileti \cite{M}, Corollary 5.4.8]
There is no largest element in $H(\Delta^0_2)$.
\end{cor}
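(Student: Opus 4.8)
The final statement to prove is the Corollary attributed to Mileti: there is no largest element in $H(\Delta^0_2)$. The excerpt essentially hands us the proof in the paragraph immediately before the corollary, so my "proposal" is really to spell that argument out carefully.

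The plan is to argue by contradiction. Suppose $H(\Delta^0_2)$ has a largest element under $\leq_w$; by the identification of $H(A)$ with $H(f)$ for a computable stable coloring $f$, this largest element is $H(A)$ for some $\Delta^0_2$ set $A$. Being largest means $H(A) \geq_w H(B)$ for every $\Delta^0_2$ set $B$, i.e., every infinite subset-or-cosubset of $A$ computes an infinite subset-or-cosubset of $B$, for all $\Delta^0_2$ $B$. Now apply Theorem~\ref{thm_huge} to the computable stable coloring associated with $A$: it has an infinite homogeneous set $H$ with $\deg(H) < \mathbf{0'}$, equivalently an infinite subset or cosubset $H$ of $A$ with $H <_T \emp'$. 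Since $H(A)$ is largest, this single set $H$ computes, for every $\Delta^0_2$ set $B$, an infinite subset or cosubset of $B$. By the limit lemma / the reduction of stable colorings to $\Delta^0_2$ sets recalled after Definition~\ref{defn_basic_bkg}, this says precisely that $\deg(H)$ is an s-Ramsey degree in the sense of Definition~\ref{defn_sRamseydeg}.

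Then the contradiction comes from part~(1) of Theorem~\ref{thm:joelims}: the only $\Delta^0_2$ s-Ramsey degree is $\mathbf{0'}$. Since $H <_T \emp'$, $\deg(H)$ is a $\Delta^0_2$ degree strictly below $\mathbf{0'}$, so it cannot be s-Ramsey — contradiction. Hence no largest element exists. I should be mildly careful on one routine point: to invoke Theorem~\ref{thm:joelims}(1) I want $\deg(H) \neq \mathbf{0'}$, which is immediate from $H <_T \emp'$; and to invoke Theorem~\ref{thm_huge} I need $A$ to be genuinely coded as a computable stable coloring, which is exactly the "Clearly, for each $\Delta^0_2$ set $A$ there is a computable stable $f$ with $H(A)\equiv_w H(f)$" remark, plus Lemma 3.5 of \cite{CJS} relating homogeneous sets of $f$ to subsets/cosubsets of $A$.

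There is no real obstacle here — the only thing to watch is not to conflate "largest element of the structure $H(\Delta^0_2)$" (a $\leq_w$-greatest class) with "a single set computing everything"; the bridge between the two is exactly Theorem~\ref{thm_huge}, which lets us extract a genuine set $H$ below $\emp'$ inside the putative largest class, after which the contradiction with s-Ramsey degrees is immediate. So the write-up is short.

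\begin{proof}
Suppose toward a contradiction that $H(\Delta^0_2)$ has a largest element. By the remark preceding Proposition~\ref{prop_joins}, this largest element is $H(A)$ for some $\Delta^0_2$ set $A$, and, fixing a computable stable coloring $f$ with $H(A) \equiv_w H(f)$, we may apply Theorem~\ref{thm_huge}: $f$ has an infinite homogeneous set $H$ with $\deg(H) < \mathbf{0'}$. By Lemma 3.5 of \cite{CJS} (see the remarks following Definition~\ref{defn_basic_bkg}), $H$ is, up to degree, an infinite subset or cosubset of $A$, so $H \in H(A)$.

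Since $H(A)$ is the largest element of $H(\Delta^0_2)$, for every $\Delta^0_2$ set $B$ we have $H(B) \leq_w H(A)$, and in particular $H$ computes some element of $H(B)$, i.e., an infinite subset or cosubset of $B$. As $B$ was an arbitrary $\Delta^0_2$ set, this says precisely that $\deg(H)$ is an s-Ramsey degree in the sense of Definition~\ref{defn_sRamseydeg}. But $\deg(H)$ is a $\Delta^0_2$ degree with $\deg(H) < \mathbf{0'}$, contradicting part~(1) of Theorem~\ref{thm:joelims}, which asserts that the only $\Delta^0_2$ s-Ramsey degree is $\mathbf{0'}$. Hence $H(\Delta^0_2)$ has no largest element.
\end{proof}
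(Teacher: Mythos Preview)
Your proposal is correct and follows exactly the paper's own argument, which is sketched in the paragraph immediately preceding the corollary: assume a largest element, extract $H <_T \emptyset'$ via Theorem~\ref{thm_huge}, observe that $\deg(H)$ would then be s-Ramsey, and contradict Theorem~\ref{thm:joelims}(1). The only (harmless) imprecision is writing ``$H \in H(A)$'' when strictly $H \in H(f)$ and $H(A) \equiv_w H(f)$; replacing $H$ by an $H' \leq_T H$ in $H(A)$ fixes this without affecting the argument.
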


\noindent Using Theorem \ref{dam1}, we can now extend this result as follows.

\begin{cor}
If $\mathscr{C}$ is a class of $\Delta^0_2$ sets that is not $\Delta^0_2$ null, then there is no largest element in $H(\mathscr{C})$.
\end{cor}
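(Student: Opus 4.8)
The plan is to mimic the argument just given for $H(\Delta^0_2)$, but with Theorem~\ref{dam1} in place of part~(1) of Theorem~\ref{thm:joelims}. Suppose, for contradiction, that $H(\mathscr C)$ has a largest element $H(A_0)$ for some $A_0 \in \mathscr C$. The key observation is that being largest in $H(\mathscr C)$ means that every member of $H(A_0)$ computes an element of $H(A)$ for every $A \in \mathscr C$; in particular, a single fixed set $H \in H(A_0)$ uniformly Muchnik-dominates the whole class. The hoped-for conclusion is that $\deg(H)$ is an almost s-Ramsey degree strictly below $\0'$, contradicting Theorem~\ref{dam1}.

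First I would produce a set $H \in H(A_0)$ with $H <_T \emp'$: since $A_0$ is $\Delta^0_2$, Theorem~\ref{thm_huge} gives an infinite homogeneous set for the associated computable stable coloring (equivalently, an infinite subset or cosubset of $A_0$) of degree strictly below $\0'$, and this set lies in $H(A_0)$. Next I would argue that $\deg(H)$ is almost s-Ramsey. For this, fix any $\Delta^0_2$ set $A$; I need $A$ to have an infinite subset or cosubset of degree $\leq \deg(H)$. Since $H(A_0)$ is largest in $H(\mathscr C)$, this is immediate whenever $A \in \mathscr C$: then $H(A) \leq_w H(A_0)$, so the single set $H \in H(A_0)$ computes some element of $H(A)$, i.e.\ an infinite subset or cosubset of $A$. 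Thus the class of $\Delta^0_2$ sets having an infinite subset or cosubset of degree $\leq \deg(H)$ contains $\mathscr C$, which by hypothesis is not $\Delta^0_2$ null; hence that class is not $\Delta^0_2$ null, and $\deg(H)$ is almost s-Ramsey by Definition~\ref{defn_asRamseydeg}. Since also $\deg(H) <_T \emp'$, Theorem~\ref{dam1} is contradicted.

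The main point to get right is the direction of the Muchnik reduction: ``largest element $H(A_0)$'' means $H(A) \leq_w H(A_0)$ for all relevant $A$, i.e.\ everything in $H(A_0)$ computes something in each $H(A)$ --- so the fixed solution $H$ for $A_0$ serves as a solution-computing oracle for every $A \in \mathscr C$, which is exactly what ``$\deg(H)$ is almost s-Ramsey'' records once we note $\mathscr C$ is non-null. No real obstacle arises beyond keeping this bookkeeping straight; everything else is quoting Theorems~\ref{thm_huge} and~\ref{dam1} and the definitions, exactly as in the proof of the preceding corollary.

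\begin{proof}
Suppose toward a contradiction that $H(A_0)$ is a largest element of $H(\mathscr C)$ for some $A_0 \in \mathscr C$. By Theorem~\ref{thm_huge} (applied to a computable stable coloring $f$ with $H(f) \equiv_w H(A_0)$), there is an infinite homogeneous set $H$ for $f$, equivalently an infinite subset or cosubset of $A_0$, with $\deg(H) < \0'$; thus $H \in H(A_0)$. For any $A \in \mathscr C$ we have $H(A) \leq_w H(A_0)$, so $H$ computes some infinite subset or cosubset of $A$. Hence the class of $\Delta^0_2$ sets having an infinite subset or cosubset of degree at most $\deg(H)$ includes $\mathscr C$, and since $\mathscr C$ is not $\Delta^0_2$ null, neither is this class. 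Therefore $\deg(H)$ is an almost s-Ramsey degree with $\deg(H) < \0'$, contradicting Theorem~\ref{dam1}.
\end{proof}
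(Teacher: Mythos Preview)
Your proof is correct and follows exactly the argument the paper intends: suppose a largest element $H(A_0)$ exists, use Theorem~\ref{thm_huge} to find $H \in H(A_0)$ of degree strictly below $\0'$, observe that $H$ then computes an element of $H(A)$ for every $A \in \mathscr C$, and conclude that $\deg(H)$ is a $\Delta^0_2$ almost s-Ramsey degree below $\0'$, contradicting Theorem~\ref{dam1}. This is precisely the paper's reasoning, which simply replaces ``s-Ramsey'' and Theorem~\ref{thm:joelims}~(1) in the previous corollary's argument by ``almost s-Ramsey'' and Theorem~\ref{dam1}.
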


For general interest, we remark that the algebraic properties of the structure $H(\Delta^0_2)$ have not previously been studied.  It can be shown, though we do not elaborate on it here, that there are no maximal elements in it, and that for every finite collection of elements in it there is an element incomparable with each of them (proofs will appear in \cite{Dt}).  Beyond this, little is known; in particular, we do not know the answer to the following question:

\begin{qstn}
Is $H(\Delta^0_2)$ elementarily equivalent to $H(\mathscr{C})$ for every class $\mathscr{C}$ of $\Delta^0_2$ sets that is not $\Delta^0_2$ null?
\end{qstn}


\section{An almost s-Ramsey degree that is not s-Ramsey}\label{sec_asram}

In this section, we give a proof of Theorem \ref{dam2}, restated equivalently below, thereby showing that the s-Ramsey degrees are a proper subclass of the almost s-Ramsey degrees.  We do not know whether the analog of Theorem \ref{thm:joelims}~(2) holds for almost s-Ramsey degrees, but as every low$_2$ degree is $\Delta^0_3$, our result is a partial step towards a negative answer.

\begin{thm2}
There is a $\Delta^0_3$ almost s-Ramsey degree that is not s-Ramsey.
\end{thm2}

\begin{proof}
Fix a $\Delta^0_2$ set $A$ with no low infinite subset or cosubset.  Computably in $\emp''$, we construct a set $D$ and infinite low sets $L_0,L_1,\ldots$ that satisfy, for every $e \in \omega$ and $i < 2$, the requirements
$$
\begin{array}{lll}
R_e & : & L_e \times \{e\} =^* D^{[e]} \wedge (\Phi^{\emp'}_e \text{ is a total martingale} \to  L_e \notin S[\Phi^{\emp'}_e]),\\
\noalign{\medskip}
S_{e,i} & : &  \Phi^D_e \text{ is total, $\{0,1\}$-valued and infinite} \to (\exists x)[\Phi^D_e(x) = 1 \wedge A(x) = i].
\end{array}
$$
The first set of requirements ensures that $\{L_e : e \in \omega\}$ is not $\Delta^0_2$ null and that $L_e \leq_T D$ for all $e$, and the second that no infinite subset or cosubset of $A$ is computable in $D$.  Hence, $\deg(D)$ will be the desired degree.

We let $D = \bigcup_s D_s$, where $D_0,D_1,\ldots$ are constructed in stages as follows.  At stage $s$, we define a finite set $D_s$, a function $f_s$ with domain $\omega$, and for each $e$ a restraint $r_{e,s}$.  We also declare each requirement either {\em online} or {\em offline}.  Let  $h$ be a computable function such that for all sets $X$ and all $x \in \omega$, $x \in X$ if and only if $h(x) \in X'$.

\medskip
\noindent {\em Construction.} 

\medskip
\noindent \mbox{{\em Stage $s = 0$.}}  Set $D_0 = \emp$, and $f_0(e) = r_{e,s} = 0$ for all $e$.  Declare all requirements $R_e$ and $S_{e,i}$ for $e \in \omega$ and $i < 2$ online.

\medskip
\noindent \mbox{{\em Stage $s+1$.}}  Let $D_s$, $f_s$, and $r_{0,s},r_{1,s},\ldots$ be given.  Assume inductively that cofinitely many requirements are still online, and that the value of $f_s$ is $0$ on cofinitely many arguments.

\medskip
\noindent \mbox{{\em Case 1: $s+1 \equiv 0\; \rm{mod}\; 3$ or $s+1 \equiv 1\; \rm{mod}\; 3$.}}   Suppose  $s+1 = 3\langle e, j \rangle + i$, where $e, j \in \omega$ and $i < 2$.  If $S_{e,i}$ is online, ask whether there exists an $x \in \omega$ and a finite set $F$ such that
\begin{enumerate}

\item $D_s \subseteq F \subseteq D_s \cup \{\langle y, e' \rangle  \geq r_{e,s} : e' \leq e \to R_{e'} \text{ online}\}$,

\item $\Phi^F_e(x) \downarrow = 1$ and $A(x) = i$,

\item for $e' \leq e$ with $R_{e'}$ online and all $\langle y, e' \rangle \leq \max F \cup \{z : z \leq  \varphi^F_{e}(x)\}$, $\Phi^{\emp'}_{f_s(e')}(h(2y+1)) \downarrow $, and if \mbox{$\langle y, e' \rangle \in F - D_s$} then $\Phi^{\emp'}_{f_s(e')}(h(2y+1)) = 1$.

\item for $e' \leq e$ with $R_{e'}$ online and all $\langle y,e' \rangle \leq \varphi^F_{e}(x)$, if \mbox{$\langle y, e' \rangle \notin F - D_s$} then $\Phi^{\emp'}_{f_s(e')}(h(2y+1))  = 0$.

\end{enumerate}
If so, we find the first such $F$ and $x$ in some fixed enumeration, set $D_{s+1} = F$, let $r_{e',s+1} = r_{e',s}$ for $e' < e$, and let $r_{e',s+1}$ be the least number greater than \mbox{$\max \{ r_{e'',s}: e \leq e'' \leq e'\}$} and $\varphi^F_{e}(x)$ for $e' \geq e$.  We say that $S_{e,i}$ {\em acts} at stage $s+1$, declare it offline, and declare all $S_{e',i}$ with $e' > e$ currently offline online again.  Otherwise, or if $S_{e,i}$ is already offline, we set $D_{s+1} = D_s$ and $r_{e',s+1} = r_{e',s}$ for all $e'$.  Either way, we let $f_{s+1} = f_s$.  Notice that the question of whether or not $x$ and $F$ in Case 1 exist is $\Sigma^{0,{\emp'}}_1$, and hence can be answered by $\emp''$.

\medskip
\noindent \mbox{{\em Case 2: $s+1 \equiv 2\; \rm{mod}\; 3$.}} We begin by choosing the least $e$ such that $R_e$ is online and $f_s(e') = 0$ for all $e' \geq e$, which must exist by inductive hypothesis.  Set $r_{e',s+1} = r_{e',s}$ for all $e'$.  Fix $e' \in \omega$ and assume we have defined $f_{s+1}$ on all $e'' < e'$.  If $e' > e$ or if $R_{e'}$ is offline, set $f_{s+1}(e') = 0$.  Otherwise, let $i$ be either a fixed lowness index for $\emp$ if there is no $e'' < e'$ such that $R_{e''}$ is online, or else $f_{s+1}(e'')$ for the greatest such $e''$.  Then let $f_{s+1}(e')$ be the result of applying to $e'$ and $i$ the $\emp''$-computable function asserted to exist by Proposition \ref{lem:uniform}.

To define $D_{s+1}$, begin by letting $D_{s+1}^{[e']} = D_s^{[e']}$ for all $e'$ such that at least one of the following holds:
\begin{enumerate}
\item $e' > e$,
\item $R_{e'}$ is offline,
\item $\Phi^{\emp'}_{f_{s+1}(e')}$ is not defined or not $\{0,1\}$-valued on $h(2x+1)$ for some $x \leq s$,
\item\label{item:dead} $\Phi^{\emp'}_{e'}$ is not defined or does not satisfy the averaging condition (\ref{cond:avg}) on some string of length $\leq s$,
\end{enumerate}
For all $e'$ for which $(\ref{item:dead})$ obtains, declare $R_{e'}$ offline, and declare all offline $S_{e'',i}$ requirements for $e'' \geq e'$ online.  For all $e'$ such that none of the above obtain, let $D_{s+1}^{[e']} = D_s^{[e']} \cup \{\langle x, e' \rangle > r_{e',s+1} : x \leq s \wedge \Phi^{\emp'}_{f_{s+1}(e')}(h(2x+1)) \downarrow = 1\}$.

\medskip
In either case above only finitely many requirements are declared offline, and $f_{s+1}$ is defined to be positive on only finitely many elements.  Thus, the induction can continue.

\medskip
\noindent \mbox{{\em End construction.}}

\medskip
The entire construction can be performed using a $\emp''$ oracle, hence $D \leq_T \emp''$.  We now verify that all requirements are satisfied.  To begin, note that each $R$ requirement can only switch from being online to being offline but not back, and each $S_{e,i}$ requirement, once offline, can only become online again because some $R_{e'}$ requirement with $e' \leq e$ has become offline.  In particular, each $S$ requirement acts at most finitely many times. Since for every $e$, $r_{e,s}$ is a nondecreasing function in $s$ that increases only when some $S_{e',i}$ with $e' \leq e$ acts, $\lim_s r_{e,s}$ exists.

\begin{claim}\label{claim:tech}
For every $e \in \omega$, $f(e) = \lim_s f_s(e)$ exists.  Moreover, if $R_e$ is permanently online then $f(e)$ is a lowness index, and if $R_e$ is not permanently online then $f(e)  = 0$ and $D^{[e]}$ is finite.
\end{claim}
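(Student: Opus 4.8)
The plan is an induction on $e$, resting on two facts established just above the claim: the online/offline status of any requirement $R_{e'}$ changes at most once, and any requirement $S_{e',i}$ acts only finitely often. Let $\pi$ be the $\emp''$-computable function of Proposition~\ref{lem:uniform}; thus in Case~2 the construction sets $f_{s+1}(e') = \pi(e', i)$ for each $e' \le e_0$ with $R_{e'}$ online, where $e_0$ is the index chosen at that stage and $i$ is $f_{s+1}(e'')$ for the greatest online $e'' < e'$ (or a fixed lowness index for $\emp$), and sets $f_{s+1}(e') = 0$ otherwise. Fix $e$. Using the inductive hypothesis and the two facts, fix a stage $s_0$ past which (i) $f_s(e'') = f(e'')$ for every $e'' < e$, (ii) the status of $R_{e''}$ is permanent for every $e'' \le e$, and (iii) no $S_{a,i}$ with $a \le e$ acts; the inductive hypothesis also gives that $f(e'')$ is a lowness index whenever $e'' < e$ and $R_{e''}$ is permanently online.

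Suppose first that $R_e$ is not permanently online, hence offline after $s_0$. Then every Case~2 stage past $s_0$ sets $f_{s+1}(e) = 0$, and Case~1 stages never change $f$, so $f(e) = 0$. Moreover $D^{[e]}$ can only grow at a Case~1 stage $3\langle a,j\rangle + i$ at which $S_{a,i}$ acts, since at a Case~2 stage the offline status of $R_e$ forces $D^{[e]}_{s+1} = D^{[e]}_s$; but condition~(1) of Case~1 requires any new element $\langle y,e\rangle$ of column $e$ to satisfy ``$e \le a \to R_e\text{ online}$'', which fails when $a \ge e$, and no $S_{a,i}$ with $a < e$ acts after $s_0$. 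So $D^{[e]}$ stabilizes and is finite.

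Now suppose $R_e$ is permanently online. Then $R_e$ is never declared offline through condition~(\ref{item:dead}), so $\Phi^{\emp'}_e$ is a total martingale. Let $e^-$ be the greatest online index below $e$, if one exists. I claim that at every sufficiently late Case~2 stage the chosen index $e_0$ satisfies $e_0 \ge e$. Indeed, while a Case~2 stage $> s_0$ has $e_0 < e$, the requirement $R_{e_0}$ is permanently online by~(ii), so $\Phi^{\emp'}_{e_0}$ is a total martingale, and by~(i) and the inductive hypothesis the value to which $\pi$ is applied at $e_0$ is a lowness index; hence $f_{s+1}(e_0) = \pi(e_0,i)$ is a lowness index and in particular nonzero, while $f_{s+1}(e') = 0$ for all $e' > e_0$, so the next Case~2 stage chooses a strictly larger index. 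As only finitely many indices lie below $e$, some Case~2 stage past $s_0$ has $e_0 \ge e$; and as soon as $e_0 \ge e$ the stage sets $f_{s+1}(e) = \pi(e, f_{s+1}(e^-)) = \pi(e, f(e^-))$ by~(i), which is a lowness index and so nonzero (or $\pi(e, j_0)$ for a fixed lowness index $j_0$ of $\emp$, if $e^-$ does not exist); thereafter $f_s(e)$ can never again be $0$, because $f$ changes only at Case~2 stages and $f_{s+1}(e) = 0$ would force $e_0 < e$, hence $f_s(e) = 0$ already. Thus all sufficiently late Case~2 stages have $e_0 \ge e$ and put $f_{s+1}(e) = \pi(e, f(e^-))$ (respectively $\pi(e,j_0)$), so $f(e) = \lim_s f_s(e)$ exists and equals this value; and since $f(e^-)$ (respectively $j_0$) is a lowness index and $\Phi^{\emp'}_e$ is a total martingale, Proposition~\ref{lem:uniform} makes $f(e)$ a lowness index. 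This completes the induction.

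I expect the main obstacle to be the ``marching'' argument for $e_0$: verifying that $e_0$ strictly increases whenever it is $< e$ --- which crucially uses~(ii), since only for a \emph{permanently} online $e_0$ do we know $\Phi^{\emp'}_{e_0}$ is a total martingale and hence that $\pi(e_0,\cdot)$ is a genuine lowness index rather than an arbitrary value that might equal $0$ --- and that $e_0$ cannot fall back below $e$ once $f_s(e)$ has become nonzero. The rest ($f(e) = 0$ and finiteness of $D^{[e]}$ in the offline case, and the identification of $\lim_s f_s(e)$ with $\pi(e, f(e^-))$ in the online case) is routine given clauses~(i)--(iii) and that $\pi$ is single-valued.
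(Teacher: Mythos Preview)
Your proof is correct and follows the same inductive scheme as the paper's. One simplification worth noting: the ``marching'' argument you flag as the main obstacle is in fact unnecessary. Given your condition~(i) and the inductive hypothesis, the scenario $e_0 < e$ at a Case~2 stage past $s_0$ is already impossible: such an $e_0$ would be permanently online by~(ii), hence $f(e_0)$ is a lowness index and in particular nonzero, but the definition of $e_0$ forces $f_s(e_0) = 0$ while~(i) gives $f_s(e_0) = f(e_0)$. The paper makes exactly this direct observation (after remarking that $0$ is not a lowness index), concluding immediately that $e_0 \ge e$ at \emph{every} Case~2 stage past the settling stage, so $f_t(e)$ is constant from then on. Your marching argument is not wrong---its hypothesis is simply never met---but it obscures that the conclusion is immediate from the setup. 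The offline case in your write-up matches the paper's (and is arguably phrased a bit more carefully regarding which $S_{a,i}$ can touch column~$e$).
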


\begin{proof}
Fix $e \in \omega$ and assume the claim holds for all $e' < e$.  Fix a stage $s \geq 0$ such that for all $e' \leq e$ and all $i < 2$,
\begin{enumerate}
\item if $e' < e$ then $f(e') \downarrow = f_{t}(e')$ for all $t > s$,
\item if $R_{e'}$ is cofinitely often offline, then it is offline at all stages $t \geq s$,
\item if $S_{e',i}$ is cofinitely often offline, then it is offline at all stages $t \geq s$.
\end{enumerate}
First suppose $R_{e}$ is online at stage $s$, and hence permanently thereafter.  Since $0$ is not a lowness index (we assume an enumeration of oracle machines, such as the standard one based on G\"{o}del numberings, that makes this true), the inductive hypothesis implies that at any stage $t \geq s$ that is congruent to $2$ modulo $3$, the number chosen at the beginning of Case 2 of the construction is at least as big as $e$.  Hence, we see from the construction that the value of $f_t(e)$ at any stage $t \geq s$ depends only on $e$ and, if there is an $R_{e'}$ with $e' < e$ which is online at stage $s$, on $f_t(e') = f(e')$ for the largest such $e'$.  Thus $f_{t}(e)$ has the same value for all $t \geq s$, so $f(e) = f_{s}(e)$.

As $R_e$ is never declared offline, it must be that condition (4) in Case 2 of the construction never occurs, and hence that $\Phi^{\emp'}_e$ is a total martingale.  Let $L$ be either $\emp$ or, if there exists an $e' < e$ with $R_{e'}$ permanently online, $\Phi^{\emp'}_{f(e')}$ for the greatest such $e'$.  Then it follows by construction and by Proposition \ref{lem:uniform} that $f(e)$ is a lowness index for $L \oplus B$, where $B$ is a set not in $S[\Phi^{\emp'}_e]$.  In particular, $f(e)$ is a lowness index, as desired.

Now suppose $R_{e}$ is offline at stage $s$.  Then $f_t(e)$ is defined to be $0$ at all stages $t \geq s$, so $f(e) = 0$.  Now no elements can be put into $D^{[e]}_t$ at any stage $t > s$ under Case 1 of the construction, because by condition (1) in that case this can only be done because of the action of some requirement $S_{e',i}$ with $e' \leq e$, and all such requirements have stopped acting by stage $s$.  Moreover, no elements can be put into $D^{[e]}_t$ under Case 2, because condition (2) in that case allows this only when $R_e$ is still online.  Hence, $D^{[e]}_t = D^{[e]}_{s}$ for all $t \geq s$, and so $D^{[e]}$ is finite.
\end{proof}

\begin{claim}
For every $e \in \omega$, requirement $R_e$ is satisfied via a set $L_e$ such that $\bigoplus_{e' \leq e} L_{e'}$ is low.
\end{claim}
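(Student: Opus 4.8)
The plan is to read off each $L_e$ from the witnesses furnished by Claim~\ref{claim:tech} and by Proposition~\ref{lem:uniform}, and then to derive lowness from the observation that the lowness indices $f(e)$ are produced by iterating the $\emp''$-computable function of Proposition~\ref{lem:uniform}.

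Fix $e$. If $R_e$ is not permanently online, then by Claim~\ref{claim:tech} the column $D^{[e]}$ is finite, and moreover $\Phi^{\emp'}_e$ cannot be a total martingale, since otherwise condition~(\ref{item:dead}) of Case~2 never applies to $e$ and $R_e$ is never declared offline. So $L_e := \{x : \langle x,e\rangle \in D\}$, which is finite and hence computable, satisfies $R_e$: its first conjunct because $L_e \times \{e\} = D^{[e]}$, its second vacuously. If $R_e$ is permanently online, then $\Phi^{\emp'}_e$ is a total martingale, and by Claim~\ref{claim:tech} and Proposition~\ref{lem:uniform} the number $f(e)$ is a lowness index for a set $L \oplus B_e$ with $B_e \notin S[\Phi^{\emp'}_e]$, where $L$ is $\emp$ if no $R_{e'}$ with $e' < e$ is permanently online and is otherwise the set for which $f(e^-)$ is a lowness index, $e^-$ the greatest permanently online index below $e$. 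I set $L_e := B_e$; then the second conjunct of $R_e$ is immediate, and the first amounts to showing $D^{[e]} =^{*} B_e \times \{e\}$. For that: $r_{e,s}$ is nondecreasing and moves only when some $S_{e',i}$ with $e' \le e$ acts, and every $S$-requirement acts finitely often, so $r_{e,s}$ and likewise $f_s(e)$ attain their final values at some stage $s_0$; past $s_0$ one has $\Phi^{\emp'}_{f(e)}(h(2x+1)) = (L \oplus B_e)(2x+1) = B_e(x)$, so Case~2 enumerates into $D^{[e]}$ exactly those $\langle x,e\rangle$ above the final value of $r_{e,\cdot}$ with $B_e(x)=1$, while conditions~(3)--(4) of Case~1 force every element that any later Case-1 action (of some $S_{a,i}$ with $a \ge e$) puts into $D^{[e]}$ to be a pair $\langle y,e\rangle$ with $\Phi^{\emp'}_{f(e)}(h(2y+1)) = 1$, i.e.\ with $B_e(y) = 1$. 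Since only finitely much enters $D^{[e]}$ before $s_0$, this gives $D^{[e]} =^{*} B_e \times \{e\}$.

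To see that $\bigoplus_{e' \le e} L_{e'}$ is low: if no $e' \le e$ has $R_{e'}$ permanently online, every such $L_{e'}$ is finite and the join is computable, hence low. Otherwise let $e^*$ be the greatest permanently online index $\le e$. Unwinding the recursion in Case~2---at each permanently online index the lowness index carried forward is the value of the Proposition~\ref{lem:uniform} function on the previous one, the new join factor at index $e'$ being exactly $B_{e'} = L_{e'}$---I would show by induction that the set for which $f(e^*)$ is a lowness index is Turing equivalent to $\bigoplus\{L_{e'} : e' \le e^*,\ R_{e'}\ \text{permanently online}\}$, which is the same join as $\bigoplus\{L_{e'} : e' \le e,\ R_{e'}\ \text{permanently online}\}$. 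The remaining $L_{e'}$ with $e' \le e$ are finite, so $\bigoplus_{e' \le e} L_{e'}$ is Turing equivalent to that set. By Claim~\ref{claim:tech} $f(e^*)$ is a lowness index, so the set it names has a $\Delta^0_2$ jump and is therefore low; hence $\bigoplus_{e' \le e} L_{e'}$ is low.

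The step I expect to be the main obstacle is the verification that $D^{[e]} =^{*} B_e \times \{e\}$ in the permanently-online case: since the infinitely many requirements $S_{a,i}$ with $a \ge e$ may all in principle enumerate into the column $D^{[e]}$, one has to confirm that conditions~(3)--(4) of Case~1---which demand agreement with $\Phi^{\emp'}_{f_s(e')}$ on the pertinent values---keep every such enumeration consistent with $B_e$, and that what is enumerated into $D^{[e]}$ before the parameters $f_s(e)$ and $r_{e,s}$ have settled is finite.
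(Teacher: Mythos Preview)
Your proof is correct and follows essentially the same route as the paper's: you define $L_e$ via the set $B$ produced by Proposition~\ref{lem:uniform} when $R_e$ is permanently online, verify $L_e \times \{e\} =^* D^{[e]}$ by checking that Case~2 enumerates $B_e$ above the final restraint while condition~(3) of Case~1 keeps all later enumerations consistent with $B_e$, and obtain lowness of the join by unwinding the iterated application of Proposition~\ref{lem:uniform} along the permanently online indices. The only cosmetic differences are that the paper takes $L_e = \emp$ (rather than the finite column) in the offline case, and that its choice of threshold stage $s$ is inherited from the proof of Claim~\ref{claim:tech} rather than defined directly via stabilization of $r_{e,s}$ and $f_s(e)$; neither affects the argument.
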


\begin{proof}

First suppose that $\Phi^{\emp'}_e$ is a total martingale.  Then condition (4) in Case 2 of the construction never occurs and $R_e$ is online at all stages.  Let $L$ be as in the proof of the preceding claim, and let $L_e$ be the set $B$ from there, so that $f(e)$ is a lowness index for $L \oplus L_e$ and $L_e \notin S[\Phi^{\emp'}_e]$.

It then remains only to show that $L_e \times \{e\}=^* D^{[e]}$.  Let $s$ be a stage as in the proof of the preceding claim.  Since no $S_{e',i}$ requirement with $e' \leq e$ can act at any stage $t \geq s$, it follows by condition (3) in Case 1 of the construction, as well as the fact that $L_e = \{ x : \Phi^{\emp'}_{f(e)}(h(2x+1))\downarrow = 1 \}$, that any element put into $D^{[e]}_t$ for the sake of an $S$ requirement must belong to $L_e \times \{e\}$.  For the same reason we must have that $r_e = r_{e,t}$ for any stage $t \geq s$, and, as mentioned in the previous claim, the number chosen at the beginning of Case 2 of the construction at any such stage $t$ cannot be smaller than $e$.  Hence, at the end of every stage $t \geq s$ that is congruent to $2$ modulo $3$, all elements $x$ in $L_e \times \{e\}$ with $r_ e < x \leq t$ are put into $D^{[e]}_t$.   It follows that $\{x \in D^{[e]} : x > \max D^{[e]}_{s} \} \subseteq L_e \times \{e\}$ and $\{x \in L_e \times \{e\} : x > r_e \} \subseteq D^{[e]}$, which yields the desired result.

Next suppose that $\Phi^{\emp'}_e$ is not a total martingale.  Then at some stage, condition (4) in Case 2 of the construction occurs and $R_e$ is declared offline.  By the previous claim, $D^{[e]}$ is finite, so if we let $L_e = \emp$ then $L_e$ is low and requirement $R_e$ is met.

 Finally, given $e$ let $e_0 < e_1 < \cdots < e_n $ be a listing of all $e' \leq e$ such that $R_{e'}$ is online at stage $s$.  Then $\bigoplus_{j \leq n} L_{e_j}$ is low, for $f(e_0)$ is a lowness index for $\emp \oplus L_{e_0}$, $f(e_1)$ is a lowness index for $(\emp \oplus L_{e_0}) \oplus L_{e_1}$, and so on.  Hence $\bigoplus_{e' \leq e} L_{e'}$ is low since $L_{e'} = \emp$ for all $e' \neq e_j$ for any $j$, and this completes the proof.
 \end{proof}
 
 \begin{claim}
 For every $e \in \omega$ and $i < 2$, $S_{e,i}$ is satisfied.
 \end{claim}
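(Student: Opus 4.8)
The plan is to run the usual finite-injury bookkeeping. Assume $\Phi^D_e$ is the characteristic function of an infinite set — otherwise $S_{e,i}$ is met vacuously — and write $W=\{x:\Phi^D_e(x)=1\}$. Suppose toward a contradiction that $A(x)=1-i$ for every $x\in W$, so that $W$ is an infinite subset of $A$ (if $i=0$) or an infinite cosubset of $A$ (if $i=1$), and observe $W\le_T D$. First I would fix a stage $s_0$ past which the entire ``higher-priority'' environment of $S_{e,i}$ is frozen: by the two preceding claims and the (already established) fact that each $S$ requirement acts only finitely often, there is an $s_0$ such that for all $t\ge s_0$ and all $e'\le e$, $f_t(e')=f(e')$, $r_{e',t}=r_{e'}$, every $R_{e'}$ that is ever declared offline has been, and no $S_{e',i'}$ with $e'\le e$ (either value of $i'$) acts after $s_0$. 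After $s_0$, $S_{e,i}$ can be re-declared online only through the action of some $S_{e',i'}$ with $e'\le e$ or the declaring offline of some $R_{e'}$ with $e'\le e$, neither of which happens; so its online/offline status past $s_0$ changes only through its own finitely many actions.

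The principal case is that $S_{e,i}$ acts at some stage after $s_0$; let $s'+1$ be its last action, so that $D_{s'+1}=F$ for a finite $F$ and some $x$ with $\Phi^F_e(x)\downarrow=1$ and $A(x)=i$. I claim that no number below $\varphi^F_e(x)$ enters $D$ after stage $s'+1$, so that $\Phi^D_e(x)=\Phi^F_e(x)=1$, contradicting $A(x)=1-i$. There are three kinds of positions $\langle y,e'\rangle<\varphi^F_e(x)$ to control. For $e'\ge e$: after $s'+1$ we have $r_{e',t}\ge r_{e',s'+1}>\varphi^F_e(x)$, the only requirements acting past $s'+1$ are $S_{e'',i''}$ with $e''\ge e$, each of whose injections lie above its own restraint (which is $\ge\varphi^F_e(x)$), and Case~2 adds to column $e'$ only above $r_{e',t}>\varphi^F_e(x)$; so these positions are frozen. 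For $e'<e$ with $R_{e'}$ offline: the column is frozen, since $R_{e'}$ went offline before $s_0$ and nothing adds to it after $s_0$. For $e'<e$ with $R_{e'}$ online: conditions~(3) and~(4) of the step at $s'+1$, together with $f_{s'}(e')=f(e')$, pin $F$ down on that column below $\varphi^F_e(x)$ — $\langle y,e'\rangle\in F$ iff $\langle y,e'\rangle\in D_{s'}$ or $y\in L_{e'}$ — and the only way a fresh element can enter column $e'$ after $s'+1$ is via Case~2, which requires $y\in L_{e'}$, whence that element already lies in $F\subseteq D$; so these positions do not change either.

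It remains to rule out the case that $S_{e,i}$ never acts after $s_0$, i.e.\ (given our standing assumptions) it is online at every stage $\ge s_0$ and its question is answered ``no'' at every stage $3\langle e,j\rangle+i>s_0$. Here I would argue that if there were any $x$ with $A(x)=i$ and $\Phi^D_e(x)=1$, then at a sufficiently late such stage $t+1$ one could exhibit a witnessing pair $(x,F)$ for the question — take such an $x$, a stage $t'\ge t$ so late that $D_{t'}$ already agrees with $D$ below $\varphi^D_e(x)$ and below the relevant restraint, and let $F$ be $D_{t'}$ adjusted on the finitely many columns $e'\le e$ so that~(3) and~(4) hold (possible because, past $s_0$, every element newly entering such a column lies in $L_{e'}\times\{e'\}$) — contradicting that the question is always ``no.'' Hence $W\subseteq\{x:A(x)=1-i\}$; thus $W$ is an infinite subset or cosubset of $A$ computable in $D$, and invoking the lowness of the finite joins $\bigoplus_{e'\le n}L_{e'}$ from the previous claim (and the way $D$ is assembled from the $L_{e'}$) one concludes that $W$ is low, contradicting the choice of $A$.

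The step I expect to be the main obstacle is the preservation argument of the third paragraph: getting the injury bookkeeping exactly right — tracking precisely which requirements, and which of Cases~1 and~2, can still deposit numbers into each column below $\varphi^F_e(x)$ after the final action of $S_{e,i}$, and verifying that conditions~(3)–(4) genuinely confine the columns $e'<e$ with $R_{e'}$ online to elements already placed in $F$. The analogous delicacy in the last paragraph — checking that the finite approximations $F$ one writes down really satisfy conditions~(1)–(4), and that the resulting $W$ is low — is of the same flavor.
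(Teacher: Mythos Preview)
Your setup and the preservation argument in the ``principal case'' are essentially correct and mirror the paper's closing paragraph. The genuine gap is in your last paragraph: from $W = \Phi^D_e \leq_T D$ you cannot conclude that $W$ is low. The set $D$ is not low---it is only $\Delta^0_3$, and indeed the whole construction is aimed at making $\deg(D)$ fail to be s-Ramsey, so $D$ certainly cannot be low. Your appeal to ``the lowness of the finite joins $\bigoplus_{e' \leq n} L_{e'}$ and the way $D$ is assembled from the $L_{e'}$'' does not bridge this: $D$ is essentially the \emph{infinite} join $\bigoplus_{e'} L_{e'}$, and $\Phi^D_e$ may query columns $D^{[e']}$ for arbitrarily large $e'$ as its input grows, so $W$ need not be computable from any finite join.

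The missing idea---and this is the heart of the paper's argument---is that one should not look at $W$ itself but at the set of $x$ for which some finite $F$ satisfies the conditions of Case~1 of the construction at a fixed late stage $s$. The key observation is that those conditions constrain $F^{[e']}$ only for $e' \leq e$ (forcing agreement with $L_{e'}$), while for $e' > e$ the set $F$ is free above the restraint. Consequently the search for such pairs $(x,F)$ is computable in $\bigcup_{e' \leq e} D^{[e']} \cup D_s$, which \emph{is} low by the previous claim. Since $\Phi^D_e$ is total and takes the value $1$ infinitely often, there are infinitely many such $x$ (take $F$ to be a sufficiently long initial segment of $D$); if every one of them had $A(x) = 1-i$, this low oracle would enumerate an infinite subset or cosubset of $A$, contradicting the choice of $A$. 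So some such $x$ has $A(x) = i$, the question asked at stage $s$ is answered affirmatively, and $S_{e,i}$ acts---your ``never acts'' case is simply impossible.
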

 
 \begin{proof}
Fix $e$ and $i$ and assume inductively that the claim holds for all $e' < e$.  Fix a stage $s \geq 0$ congruent to $i$ modulo $3$ such that for all $e' \leq e$, $f_s(e') = f(e)$ and $D^{[e']}_s=D^{[e']}$ if $R_{e'}$ is not permanently online, and for all $e' < e$, $r_{e',s} = r_e$ and no $S_{e',i}$ requirement with $e' < e$ acts at or after stage $s$.  Assume further that $\Phi^{D}_e$ is total, $\{0,1\}$-valued, and infinitely often takes the value $1$, as otherwise $S_{e,i}$ is satisfied trivially.  Since $L_{e'} \times \{e'\}=^* D^{[e']}$ for all $e' \leq e$, it follows by the previous claim that $\bigcup_{e' \leq e} D^{[e']}$ is low, and since $D_s$ is finite, also that $\bigcup_{e' \leq e} D^{[e']}\cup D_s$ is low.

Now there must exist an $x \in \omega$ and a finite set $F$ such that $A(x) = i$ and such that the following conditions hold:
\begin{enumerate}

\item $D_s \subseteq F \subseteq D_s \cup \{\langle y, e' \rangle  \geq r_{e,s} : e' \leq e \to R_{e'} \text{ online}\}$,
\item $\Phi^{F}_e(x) \downarrow = 1$,
\item for all $e' \leq e$, $F^{[e']} \subseteq D^{[e']}$,
\item  for all $e' \leq e$,  \mbox{$F^{[e']} \res \varphi^{F}_e(x)) = D^{[e']} \res \varphi^{F}_e(x)$}.
\end{enumerate}
Indeed, from our assumptions about $\Phi^D_e$ it follows that there exist arbitrarily large numbers $x$ and corresponding finite sets $F$ satisfying (1)--(4) above, for example all sufficiently long initial segments of $D$.  And we can clearly find such $x$ and $F$ computably in $\bigcup_{e' \leq e} D^{[e']}\cup D_s$.  Hence, if $A(x)$ were equal to $1-i$ for all such $x$, then depending as $i$ is $0$ or $1$, $\bigcup_{e' \leq e} D^{[e']}\cup D_s$ could compute an infinite subset or infinite cosubset of $A$, contradicting that $A$ has no low infinite subset or cosubset.

By choice of $s$, it is easily seen that for all $e' \leq e$, all elements in $D^{[e']} - D_s$ belong to $L_{e'} \times \{e'\}$.  It follows that the question about an $x \in \omega$ and a finite set $F$ asked at stage $s$ of the construction is precisely the question of whether there exist $x$ and $F$ satisfying the conditions above, and as such must have an affirmative answer.  Hence $S_{e,i}$ acts, meaning that for some such $x$ and $F$, $D_{s+1} = F$ and $r_{e',t}$ is greater than $\varphi^F_e(x)$ for all $t > s$ and all $e' \geq e$.  No requirements can then ever put into $D_t$ any elements below $\varphi^F_e(x)$ at any stage $t > s$, meaning that the $\Phi^F_e(x)$ computation is preserved and so $\Phi^D_e(x) = 1$. Consequently, requirement $S_{e,i}$ is satisfied.
 \end{proof}
 \end{proof}
 
\begin{qstn}\label{q:low2}
Does there exist a low$_2$ almost s-Ramsey degree?
\end{qstn}
 
 \section{Almost stable Ramsey's theorem}\label{sec_rm}
 
The proof-theoretic strength of $\SRT^2_2$, as a principle of second order arithmetic, was first studied by Cholak, Jockusch, and Slaman (\cite{CJS}, Sections 7 and 10).
One major open problem is whether $\SRT^2_2$ implies $\WKL$ over $\RCA$ (see \cite{CJS}, p. 53), the closest related result being by Hirschfeldt, et al. \cite[Theorem 2.4]{HJKLS} that $\SRT^2_2$ implies the weaker axiom $\DNR$.  (That $\WKL$ does not imply $\SRT^2_2$ is by \cite{CJS}, Theorems 11.1 and 11.4; it can also be seen by Theorem \ref{thm:no_low} and the fact that $\WKL$ has a model consisting entirely of low sets).  Another question is whether $\SRT^2_2$ implies $\COH$, which is equivalent by Theorem 1.3 of \cite{CJS} and the correction given in section A.1 of \cite{M} to the question of whether $\SRT^2_2$ implies $\RT^2_2$.  For completeness, we recall the definitions of $\DNR$ and $\COH$.
\begin{defn}The following definitions are made in $\RCA$.
\begin{enumerate}
\item $\COH$ is the statement that for every sequence $\langle X_i : i \in \N \rangle$ of sets, there is an infinite set $X$ such that for every $i \in \N$, either $X \subseteq^* X_i$ or $X \subseteq^* \overline{X_i}$.
\item $\DNR$ is the statement that for every set $X$ there exists a function $f$ that is DNR$^X$, i.e., such that for all $e \in \N$, $f(e) \neq \Phi^X_e(e)$.
\end{enumerate}
\end{defn}

In this section, we study several principles inspired by our investigations above and related to $\SRT^2_2$ by means of a formal notion of $\Delta^0_2$ nullity.

\begin{defn}\label{defn:rmmart}The following definitions are made in $\RCA$.
\begin{enumerate}
\item A {\em martingale approximation} is a function $M : 2^{<\N} \times \N \to \Q^{\geq0}$ such that $\lim_s M(\sigma,s)$ exists for every $\sigma \in 2^{<\N}$ (i.e., $M(\sigma,s) = M(\sigma,t)$ for all sufficiently large $s,t \in \N$), and for all $s \in \N$,
$$
2 M(\sigma,s) = M(\sigma0,s) + M(\sigma1,s).
$$
\item We say $M$ {\em succeeds on} a stable coloring $f : [\N]^2 \to 2$ if
\begin{equation}\label{eqtn:rmmartcond}
(\forall n)(\exists \sigma)(\exists s)(\forall t \geq s)(\forall x < |\sigma|)[\sigma(x) = f(x,t) \wedge M(\sigma,t) = M(\sigma,s) > n].
\end{equation}
\end{enumerate}
\end{defn}

\noindent We can now state an ``almost stable Ramsey's theorem'', along with principles asserting the existence of s-Ramsey and almost s-Ramsey degrees.

\begin{defn}\label{defn:measprinc}The following definitions are made in $\RCA$.
\begin{enumerate}
\item $\ASRT^2_2$ is the statement that for every martingale approximation $M$, there is a stable coloring $f \leq_T M$ on which $M$ does not succeed and which has an infinite homogeneous set.

\item $\SRAM$ is the statement that for every set $X$, there is a set $Y$ as follows: every stable coloring $f\leq_T X$ has an infinite homogeneous set $H \leq_T Y$.

\item $\ASRAM$ is the statement that for every set $X$, there is a set $Y$ as follows: for every martingale approximation $M \leq_T X$ there is a stable coloring $f\leq_T X$ on which $M$ does not succeed and which has an infinite homogeneous set $H \leq_T Y$.
\end{enumerate}
\end{defn}

\noindent Notice that the class of $\Delta^0_2$ sets having an infinite subset or cosubset in a given $\omega$-model of $\ASRT^2_2$ is not $\Delta^0_2$ null.

We begin with the following formalization of Proposition \ref{thm:lutzbig}~(1).  Recall that $\mathsf{B}\Pi^0_1$ is the collection of all statements of the form$$\forall n[(\forall x < n)(\exists y)\varphi(x,y) \to (\exists m)(\forall x < n)(\exists y < m)\varphi(x,y)],$$where $\varphi$ is a $\Pi^0_1$ formula (we do not know if its use below can be avoided).

\begin{lem}[$\RCA + \mathsf{B}\Pi^0_1$]
For every martingale approximation $M$, there is a stable coloring $f \leq_T M$ on which $M$ does not succeed.
\end{lem}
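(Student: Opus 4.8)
The plan is to build $f$ from $M$ by a greedy ``descent'' procedure. At stage $s$ walk down $2^{<\N}$, at each string moving to whichever child has stage-$s$ martingale value not exceeding that of the current string: define $\rho_s \in 2^{<\N}$ of length $s$ by $\rho_s(k) = 0$ if $M((\rho_s \res k){}^\frown 0, s) \leq M(\rho_s \res k, s)$ and $\rho_s(k) = 1$ otherwise, for $k < s$, and set $f(x,s) = \rho_s(x)$ for $x < s$ and $f(x,s) = 0$ for $x \geq s$ (reading $f$ as a coloring of pairs in the obvious way). Then $f \leq_T M$, and $f$ exists by $\Delta^0_1$ comprehension relative to $M$. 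Using the averaging condition at stage $s$ --- which is the only place the martingale condition on $M$ is used --- a $\Sigma^0_1$ induction on $k$ (available in $\RCA$) shows that $M(\rho_s \res k, s) \leq M(\lambda, s)$ for all $k \leq s$, at every stage $s$; this inequality is what will ultimately force $M$ not to succeed.

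The crux is verifying that $f$ is a stable coloring, i.e.\ that $\lim_s f(x,s)$ exists for each $x$, and this is where $\mathsf{B}\Pi^0_1$ enters. The key observation is that $\rho_s \res (x+1)$ depends only on the values $M(\sigma, s)$ for $\sigma \in 2^{\leq x+1}$, so $\rho_s \res (x+1)$ becomes eventually constant once all these finitely many values have stabilized. The hypothesis on $M$ gives, for each individual $\sigma$, a stage after which $M(\sigma, \cdot)$ is constant, and the predicate ``$M(\sigma, \cdot)$ is constant from stage $s$ on'' is $\Pi^0_1$ in $M$; applying the corresponding instance of $\mathsf{B}\Pi^0_1$ then yields, for each $n$, a single stage $S$ by which $M(\sigma, \cdot)$ has stabilized for \emph{all} $\sigma \in 2^{\leq n}$ at once. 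Fixing $x$ and taking such an $S$ for $n = x+1$, the string $\rho_s \res (x+1)$ is then the same for all $s \geq \max(S, x+2)$, so $f(x, \cdot)$ is eventually constant. I expect this repackaging to be the main technical point: it sidesteps an induction on $x$ over the $\Sigma^0_2$ statement ``column $x$ of $f$ converges'', which is precisely why $\mathsf{B}\Pi^0_1$ suffices in place of $\Sigma^0_2$ induction.

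Finally, to see that $M$ does not succeed on $f$, I would take $n$ to be any integer at least $\widehat M(\lambda) := \lim_s M(\lambda, s)$ and check the negation of the success condition of Definition \ref{defn:rmmart} directly. Given $\sigma$ and $s$: if some bit of $\sigma$ disagrees with $f(\cdot, t)$ at some $t \geq s$, the first disjunct holds and we are done; otherwise $\sigma$ is locked in as a common prefix of the strings $\rho_t$ from stage $s$ onward, and then the descent inequality from the first paragraph, applied at a large stage and passed to the limit, gives $\widehat M(\sigma) \leq \widehat M(\lambda) \leq n$, while $M(\sigma, \cdot)$ is eventually constant with value $\widehat M(\sigma)$. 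Hence either $M(\sigma, t) \neq M(\sigma, s)$ for some $t \geq s$, or else $M(\sigma, s) = \widehat M(\sigma) \leq n$; in every case the required disjunction holds, so $M$ fails on $f$. This argument works with $\sigma$ and the formula directly and never needs $\lim_s f(\cdot, s)$ as a set, so no comprehension beyond $\Delta^0_1$ is used.
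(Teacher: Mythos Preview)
Your proof is correct and follows essentially the same approach as the paper: a greedy descent through the stage-$s$ martingale to define $f(\cdot,s)$, with $\mathsf{B}\Pi^0_1$ used exactly as you describe to uniformly bound the settling times of $M(\sigma,\cdot)$ for all short $\sigma$ and thereby establish stability. The only cosmetic difference is that the paper first normalizes to $\lim_s M(\lambda,s)=1$ and uses the fixed threshold $1$ in the greedy step (take the $0$-child iff its stage-$s$ value is $\leq 1$), witnessing non-success with $n=1$, whereas you compare each child to its parent and take $n \geq \widehat M(\lambda)$ at the end.
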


\begin{proof}
Let $M$ be a martingale approximation, say with $\lim_s M(\lambda, s) = 1$.  Then by Definition \ref{defn:rmmart}, if $M(\sigma,s) \leq 1$ for some $s$, either $M(\sigma0,s) \leq 1$ or $M(\sigma1,s) \leq 1$.  Choose $s_0$ so that $M(\lambda, s) \leq 1$ for all $s \geq s_0$.  For every $x$ and $s \geq s_0$, a simple $\Sigma^0_0$ induction then shows that there exists $\sigma \in 2^{<\N}$ of length $x+1$ such that
$$
 (\forall y \leq x+1)[M(\sigma \res y, s) \leq 1] \, \wedge \,(\forall y \leq x)[\sigma(y) = 1 \to M((\sigma \res y) \, 0, s) > 1 ],
$$
and that this string is unique.  Define $f : [\N]^2 \to 2$ by letting $f(x,s)$ for $x < s$ be $0$ or $\sigma(x)$ for the above $\sigma$ depending as $s < s_0$ or $s \geq s_0$.  Clearly, $f $ has a $\Sigma^0_0$ definition with $M$ as parameter, so $f \leq_T M$.  We claim that $f$ is stable and that $M$ does not succeed on it.  Fix $x$ in $\N$ and using $\mathsf{B}\Pi^0_1$ choose an $s\geq s_0$ with $M(\sigma,t) = M(\sigma,s)$ for all $t \geq s$ and $\sigma \in 2^{<\N}$ of length $\leq x + 1$.  Then the $\sigma$ used to define $f(x,s)$ will be same as that used to define $f(x,t)$ for all $t \geq s$.  Hence, $f(x,t) = \sigma(x)$ for all $t \geq s$, and as $M(\sigma, t) \leq 1$ we have the negation of (\ref{eqtn:rmmartcond}) holding with $n = 1$.
\end{proof}

Basic relations of implication and nonimplication between $\SRT^2_2$ and the principles given in Definition \ref{defn:measprinc} are established in the next proposition.

\begin{prop}Over $\RCA$,
\
\begin{enumerate}
\item $\ACA \to \SRAM \to \SRT^2_2 \to \ASRT^2_2$ and $\SRAM \to \ASRAM \to \ASRT^2_2$,
\item $\SRAM$ does not imply $\ACA$, and $\SRT^2_2$ does not imply $\SRAM$.
\end{enumerate}
\end{prop}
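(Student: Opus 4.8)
The plan is to get the implications of (1) by unwinding the definitions (together with one appeal to the Lemma above), and the two nonimplications of (2) from $\omega$-models. \emph{Implications.} For $\ACA\to\SRAM$ I would take $Y=X'$, available by arithmetical comprehension: if $f\le_T X$ is stable then, as in \cite{CJS}, Lemma~3.5, the set $A_f=\{x:\lim_s f(x,s)=1\}$ is $\Delta^0_2$ in $X$, hence $\le_T X'$; one of $A_f,\overline{A_f}$ is infinite (in $\RCA$, if one is bounded the other contains a tail), and thinning it computably in $X'$ gives an infinite homogeneous set for $f$ below $Y$. The implications $\SRAM\to\SRT^2_2$ and $\ASRAM\to\ASRT^2_2$ come from instantiating $X$ with the given stable coloring, resp.\ with the given martingale approximation $M$ (and, in the $\ASRAM$ case, taking $M$ itself for the inner parameter): the witnessing $Y$ then bounds a homogeneous set for that very coloring. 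For $\SRT^2_2\to\ASRT^2_2$ and $\SRAM\to\ASRAM$, given a martingale approximation $M$, I would use the preceding Lemma to get a stable $f\le_T M$ on which $M$ does not succeed, and then apply $\SRT^2_2$ (resp.\ the $Y$ given by $\SRAM$ at $M$) to this particular $f$. The one delicate point is that the Lemma is stated over $\RCA+\mathsf{B}\Pi^0_1$, so one must either derive $\mathsf{B}\Pi^0_1$ from the hypothesis (it follows, e.g., once $\SRT^2_2$ is known to prove $\BS$) or rework the Lemma's construction under $\SRT^2_2$ without the bounding principle.

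\emph{$\SRT^2_2\not\to\SRAM$.} Since $\SRAM\to\SRT^2_2$, it is enough to build an $\omega$-model of $\SRT^2_2$ in which $\SRAM$ fails. I would form the Turing ideal $\mathscr{S}=\bigcup_n\{Z:Z\le_T Z_n\}$ along a chain $\emptyset=Z_0\le_T Z_1\le_T\cdots$: dovetailing over all stable colorings that appear, at stage $n$ put $Z_{n+1}=Z_n\oplus H_n$, where $H_n$ is an infinite homogeneous set for the $n$-th such coloring with $(Z_n\oplus H_n)''\le_T Z_n''$, as furnished by the relativization of Theorem~\ref{thm_cjs}. Then $Z_n''\le_T\emptyset''$ for all $n$, so every member of $\mathscr{S}$ is low$_2$; hence $\mathscr{S}\models\SRT^2_2$, while by Theorem~\ref{thm:joelims}(2) no member of $\mathscr{S}$ has s-Ramsey degree, so $\SRAM$ fails in $\mathscr{S}$ already at $X=\emptyset$.

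\emph{$\SRAM\not\to\ACA$.} I expect this to be the main obstacle. The goal is an $\omega$-model $\mathscr{I}$, i.e.\ a Turing ideal, that is not closed under the Turing jump (say with $\emptyset'\notin\mathscr{I}$, so $\ACA$ fails) yet in which every $X$ has a $Y\in\mathscr{I}$ of ``$X$-s-Ramsey'' degree, meaning $Y$ computes an infinite subset or cosubset of every $\Delta^0_2(X)$ set (equivalently, bounds a homogeneous set for every stable $f\le_T X$). A principal ideal cannot work, since its generator $Y$ would be $Y$-s-Ramsey, forcing $\deg(Y)=\deg(Y')$ by the relativization of Theorem~\ref{thm:joelims}(1); so I would build $\mathscr{I}$ along a chain $\emptyset=X_0\le_T X_1\le_T\cdots$ with $X_{n+1}=X_n\oplus Y_n$ and $\deg(Y_n)$ an $X_n$-s-Ramsey degree. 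Since $X$-s-Ramsey-ness is inherited by everything computing $X_{n+1}$ and by all $X\le_T X_n$, the resulting ideal models $\SRAM$. The crux is to run this while maintaining $X_n\not\ge_T\emptyset'$, which reduces to the cone-avoidance statement: for every $X\not\ge_T\emptyset'$ there is an $X$-s-Ramsey $Y$ with $X\oplus Y\not\ge_T\emptyset'$. Proving this---strengthening the constructions behind Theorems~\ref{thm:joelims} and~\ref{thm_huge} so as to diagonalize against $\Phi^{X\oplus Y}_e=\emptyset'$ for all $e$ simultaneously with producing an s-Ramsey set for all $\Delta^0_2(X)$ instances at once---is where the real work lies. (Were no such $Y$ to exist, one would instead have to separate $\SRAM$ from $\ACA$ by a first-order conservation argument, a different and presumably harder route.)
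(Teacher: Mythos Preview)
Your approach matches the paper's in every part. The cone-avoidance statement you flag as ``where the real work lies'' is in fact Corollary~5.1.7 of Mileti~\cite{M} (relativized): for every $X \not\geq_T \emptyset'$ there is $Y \geq_T X$ with $Y \not\geq_T \emptyset'$ that is s-Ramsey relative to $X$; the paper simply cites and iterates this, and handles the $\mathsf{B}\Pi^0_1$ issue by citing \cite{CJS} that $\SRT^2_2$ (hence $\SRAM$) proves it.
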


\begin{proof}
Clearly, $\SRAM \to \SRT^2_2$ and $\ASRAM \to \ASRT^2_2$.  As for the implications $\SRT^2_2 \to \ASRT^2_2$ and $\SRAM \to \ASRAM$, these follow from the preceding lemma and the fact that $\SRT^2_2$, and hence also $\SRAM$, implies $\mathsf{B}\Pi^0_1$ (\cite{CJS}, comments after Definition 6.4, and Lemma 10.6).
That $\ACA \to \SRAM$ amounts to a formalization of the fact that $\0'$ is an s-Ramsey degree, and is straightforward.

%

We now prove (2). By relativizing Corollary 5.1.7 of Mileti \cite{M}, we get that for any set $X \ngeq_T \emp'$ there is set $Y \geq_T X$ such that $Y \ngeq_T \emp'$ and $Y$ is s-Ramsey relative to $X$ (i.e., computes an infinite homogeneous set for every $X$-computable stable coloring).  Iterating, we thus obtain a sequence $Y_0 \leq_T Y_1 \leq_T \cdots$ such that $Y_e \ngeq_T \emp' $ and $Y_{e+1}$ is \mbox{s-Ramsey} relative to $Y_e$ for every $e$.  Then the ideal $\{ S : (\exists e)[S \leq_T Y_e]\}$ is clearly an $\omega$-model of $\SRAM$ containing no set of degree $\0'$, and hence not a model of $\ACA$.  That $\SRT^2_2$ does not imply $\SRAM$ is because the former has an $\omega$-model consisting entirely of low$_2$ sets by relativizing and iterating Theorem \ref{thm_cjs}, whereas the latter does not by Theorem \ref{thm:joelims}~(2).
\end{proof}

The next result establishes a certain degree of similarity between $\ASRT^2_2$ and $\SRT^2_2$.  In particular, we see that $\ASRT^2_2$ is not overly weak by comparison with at least some of the principles studied in conjunction with $\SRT^2_2$.  The proof resembles that of Theorem 2.4 of \cite{HJKLS} in that it uses the result that every effectively immune set computes a DNR function (see \cite{J2}, p. 199)).  Here we also need the fact, due to Ku\v{c}era, that every 1-random set is effectively bi-immune (\cite{Ku}, Theorem 6).

\begin{prop}\label{prop:wklnoasrt}
Over $\RCA$, $\ASRT^2_2$ implies $\DNR$ but is not implied by $\WKL$.
\end{prop}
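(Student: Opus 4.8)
The plan is to prove the two halves by essentially unrelated arguments.

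\textbf{$\ASRT^2_2 \to \DNR$.} Given a set $X$, I want to produce a function that is DNR relative to $X$; the argument parallels the proof of \cite{HJKLS}, Theorem~2.4. Let $u$ be the index fixed in Section~\ref{sec_meas}, so that $\Phi^{X'}_u$ is a rational-valued martingale whose success set contains $S[U^X]$, i.e., every set that is not $X$-random. First I would manufacture, $X$-computably, a martingale approximation $M$ with $\lim_s M(\sigma,s)=\Phi^{X'}_u(\sigma)$: run the standard approximation $\{X'_s\}$, compute $\Phi^{X'_s}_{u,s}$, and then tidy the values stagewise (setting $M(\sigma 0,s)=\min(\tilde\Phi(\sigma 0,s),2M(\sigma,s))$ and $M(\sigma 1,s)=2M(\sigma,s)-M(\sigma 0,s)$, where $\tilde\Phi$ is the raw, possibly-undefined approximation read as $0$ when undefined) so that the averaging condition holds at every finite stage. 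Applying $\ASRT^2_2$ to $M$ yields a stable coloring $f\le_T M$, on which $M$ does not succeed, with an infinite homogeneous set $H$. Unwinding Definition~\ref{defn:rmmart}(2), the statement that $M$ does not succeed on $f$ says precisely that $\Phi^{X'}_u$ does not succeed on $A_f:=\{x:\lim_s f(x,s)=1\}$; since $\Phi^{X'}_u$ succeeds on every non-$X$-random set, $A_f$ is $X$-random. By Ku\v{c}era's theorem (\cite{Ku}, relativized), $A_f$—and hence also $\overline{A_f}$—is effectively $X$-immune with an $X$-computable immunity bound, and $H$ is an infinite subset of one of these two sets. By the relativization of the fact that every infinite subset of an effectively immune set is of DNR degree (the result underlying \cite{HJKLS}, Theorem~2.4, going back to \cite{J2}), $H\oplus X$ then computes a function that is DNR relative to $X$, so $\DNR$ holds. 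Note one cannot shortcut this via "$\WKL\to\DNR$", since that implication is true.

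\textbf{$\WKL\not\to\ASRT^2_2$.} Here I would build an $\omega$-model of $\WKL$ in which $\ASRT^2_2$ fails. Running the iterated low basis theorem with some extra bookkeeping, fix $\emp=G_0\le_T G_1\le_T\cdots$ with $G_{n+1}$ the join of $G_n$ with a path, chosen low over $G_n$, through the $n$-th $G_n$-computable infinite binary tree, arranging that the reductions witnessing $G_n'\le_T\emp'$ are uniform in $n$; then $\bigoplus_n G_n\le_T\emp'$ and $\mathscr M:=\{S:(\exists n)\,S\le_T G_n\}$ is an $\omega$-model of $\WKL$ whose members are all low and are enumerated uniformly by a single $\Delta^0_2$ set. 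Let $\mathscr D$ be the class of $\Delta^0_2$ sets having an infinite subset or cosubset in $\mathscr M$. Every such subset is $G_n$-computable for some $n$, and (relativizing the easy fact quoted in Section~\ref{sec_meas}) the class of sets with an infinite $Z$-computable subset or cosubset is contained in the success set of a $Z'$-computable martingale, uniformly in $Z$; taking $Z=G_n$ and combining over $n$ via Proposition~\ref{thm:lutzbig}(3) shows $\mathscr D$ is $\Delta^0_2$ null. Fix a $\Delta^0_2$ martingale $\widehat M$ with $\mathscr D\subseteq S[\widehat M]$ and realize it as a \emph{computable} martingale approximation $M$ with $\lim_s M(\sigma,s)=\widehat M(\sigma)$, by the same tidying as above; then $M\in\mathscr M$. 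If $\mathscr M$ satisfied $\ASRT^2_2$, applying it to $M$ would give a stable coloring $f\le_T M$ on which $M$ does not succeed, with an infinite homogeneous set $H\in\mathscr M$. But then $\widehat M$ does not succeed on $A_f$, so $A_f\notin S[\widehat M]$ and hence $A_f\notin\mathscr D$; whereas $H\in\mathscr M$ is an infinite subset or cosubset of $A_f$, so $A_f\in\mathscr D$—a contradiction. Hence $\mathscr M\not\models\ASRT^2_2$, so $\RCA+\WKL\not\vdash\ASRT^2_2$.

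\textbf{Main obstacle.} The delicate step is manufacturing honest \emph{martingale approximations} in the sense of Definition~\ref{defn:rmmart}: one must verify that the stagewise-tidied approximations actually converge at every string, and—for the first half—that this can be carried out over $\RCA$, which, as with the preceding Lemma, may cost a use of $\mathsf{B}\Pi^0_1$ (in the second half this is free, since we work in an $\omega$-model, where all arithmetical statements about the fixed computable $M$ hold outright). Two secondary points requiring care are the uniformity $\bigoplus_n G_n\le_T\emp'$ in the iterated low basis construction, and the precise statement and relativization of the lemma that an infinite subset of an effectively immune set is of DNR degree, which I would either cite or reprove by the standard argument lower-bounding the Kolmogorov complexity of the initial segments of such a subset.
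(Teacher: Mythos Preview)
Your argument for $\ASRT^2_2 \to \DNR$ is essentially identical to the paper's: build a martingale approximation from $\Phi^{X'}_u$, apply $\ASRT^2_2$, observe that the resulting limit set $A_f$ is $X$-random, invoke Ku\v{c}era's effective bi-immunity, and conclude that the homogeneous set $H$ computes a $\mathrm{DNR}^X$ function.

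For $\WKL \not\to \ASRT^2_2$ your route is correct but genuinely different from the paper's. The paper takes a single incomplete $\Delta^0_2$ PA degree $\mathbf{d}$, uses the density of PA degrees to get an $\omega$-model of $\WKL$ consisting entirely of sets of degree $\leq \mathbf{d}$, and then applies Theorem~\ref{dam1} directly: since $\mathbf{d} < \mathbf{0}'$, $\mathbf{d}$ is not almost s-Ramsey, so a single $\Delta^0_2$ martingale already succeeds on every $\Delta^0_2$ set with an infinite subset or cosubset in the model. You instead iterate the low basis theorem with uniform lowness indices to get a model whose second-order part is uniformly $\Delta^0_2$, and then argue nullity by hand: relativize the ``easy fact'' (sets with an infinite $Z$-computable subset or cosubset lie in the success set of a $Z'$-computable martingale, uniformly in $Z$), and combine over $n$ via Proposition~\ref{thm:lutzbig}(3). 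The paper's version is shorter and showcases the main theorem of Section~\ref{sec_sram}; yours is more self-contained, bypassing Theorem~\ref{dam1} entirely, at the cost of the extra bookkeeping needed to make the iterated low basis construction uniform enough that $\bigoplus_n G_n \leq_T \emptyset'$.
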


\begin{proof}
For the implication, we give only an argument for $\omega$-models, as it, and all the results it employs, admit straightforward formalization in $\RCA$. So let $\M$ be an $\omega$-model of $\ASRT^2_2$ and fix $X \in \M$.  Fix $u$ as in the proof of Proposition \ref{lem:uniform}, let $\widetilde{M} = \Phi^{X'}_u$, and let $\{\widetilde{M}_s\}_{s \in \omega}$ be an $X$-computable approximation of $\widetilde{M}$, sped up to ensure that $2 \widetilde{M}_s(\sigma) = \widetilde{M}_s(\sigma0) + \widetilde{M}_s(\sigma1)$ for all $\sigma$ and $s$.  If we define $M$ by $M(\sigma,s) = \widetilde{M}_s(\sigma)$ for all $\sigma$ and $s$, then $M \in \M$ and is a martingale approximation, so there exists a stable $X$-computable coloring $f \in \M$ and an infinite set $H \in \M$ such that $M$ does not succeed on $f$ and $H$ is homogeneous for $f$.  If we let $A = \{x : \lim_s f(x,s) = 1 \}$ then $\widetilde{M}$ does not succeed on $A$, so $A$ is $X$-random and hence 
effectively bi-immune relative to $X$.  Then $H$, being an infinite subset or cosubset of $A$, is effectively immune relative to $X$, and so computes a DNR$^X$ function $g \in \M$.

For the nonimplication, recall that for every incomplete $\Delta^0_2$ PA degree $\bf d$ there exists an $\omega$-model of $\WKL$ consisting only of sets of degree below $\bf d$ (this is easily constructed using the fact that the PA degrees are dense; see Simpson \cite{Si2}, Theorem 6.5).  Let $\M$ be any such model.  By Theorem \ref{dam1}, $\bf d$ is not almost s-Ramsey, and so there is a $\Delta^0_2$ martingale $\widetilde{M}$ which succeeds on every $\Delta^0_2$ set containing an infinite subset or cosubset of degree at most $\bf d$.  Let $\{\widetilde{M}_s\}_{s \in \omega}$ be a (suitably sped up) computable approximation to $\widetilde{M}$, and define a martingale approximation  $M \in \M$ from it as above.  Since all stable colorings in $\M$ that have an infinite homogeneous set in $\M$ have one of degree below $\bf d$, it follows that $M$ succeeds on them all.  Thus, $\M$ is not a model of $\ASRT^2_2$.
\end{proof}

It follows that neither $\DNR$ nor $\COH$ imply $\ASRT^2_2$ either, the latter because $\COH$ does not imply $\DNR$ by Theorem 3.7 of \cite{HJKLS}.

In view of the remarks made at the beginning of the section, it is natural to ask whether $\ASRT^2_2$ implies $\WKL$ or  $\COH$ (the preceding proposition makes the first of these at least plausible).  We conclude this section by giving negative answers to both questions.

\begin{prop}
Over $\RCA$, $\ASRT^2_2$ does not imply $\WKL$.
\end{prop}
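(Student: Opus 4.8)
The plan is to exhibit an $\omega$-model of $\ASRT^2_2$ in which no set has PA degree; such a model cannot satisfy $\WKL$, since the computable, infinite, binary tree of $\{0,1\}$-valued DNR functions has only paths of PA degree, so none of its paths lies in the model. For convenience I would build the model inside the low sets: this keeps us in a non-null part of $\CS$ (Theorem~\ref{thm:ht}), and, more importantly, it forces the limit set of any relevant stable coloring to be $\Delta^0_2$ outright rather than only $\Delta^0_2$ relative to a parameter.

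Concretely, I would construct an increasing sequence $\emp = C_0 \leq_T C_1 \leq_T \cdots$ of low sets, none of PA degree, with a bookkeeping that eventually lists every martingale approximation $M$ computable from some $C_s$; the model is then the Turing ideal $\mathcal I = \{Z : (\exists s)\, Z \leq_T C_s\}$. When the construction turns to a martingale approximation $M$, with $M \leq_T C_s$, it produces a stable coloring $f \leq_T M$ on which $M$ does not succeed together with an infinite homogeneous set $H$ for $f$, and sets $C_{s+1} = C_s \oplus H$; the demand on $H$ is that $C_s \oplus H$ be low and not of PA degree. Since $f \leq_T M \leq_T C_s$ and $H \leq_T C_{s+1}$, both $f$ and $H$ lie in $\mathcal I$, so $\mathcal I \models \ASRT^2_2$; and since each $C_s$ is low and not of PA degree and both properties pass to $\leq_T$-reducts, every member of $\mathcal I$ is low and not of PA degree, so $\mathcal I \models \RCA$ but $\mathcal I \not\models \WKL$.

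Everything thus reduces to the following PA-avoidance lemma: for every low set $C$ that is not of PA degree and every martingale approximation $M \leq_T C$, there is a stable coloring $f \leq_T M$ on which $M$ does not succeed and an infinite homogeneous set $H$ for $f$ with $C \oplus H$ low and not of PA degree. The first half of its proof is the machinery of Section~\ref{sec_meas}, relativized to $M$: since $M$ is low, both $\widehat M(\sigma) = \lim_s M(\sigma,s)$ and $\Phi^{M'}_u$ (with $u$ as fixed in Section~\ref{sec_meas}) are $\Delta^0_2$, so normalizing and averaging yields a $\Delta^0_2$ martingale $\mathcal N$ with $S[\mathcal N] \supseteq S[\widehat M] \cup S[U^M]$; betting against $\mathcal N$ produces a $\Delta^0_2$ set $A \notin S[\mathcal N]$, hence $M$-random with $\widehat M$ failing to succeed on it. Taking $f \leq_T M$ to be a stable coloring with limit set $A$ (via an $M$-computable approximation of $A$), $M$ does not succeed on $f$, and an infinite homogeneous set for $f$ is exactly an infinite subset or cosubset of $A$; since $A$ is bi-immune, both $A$ and $\overline A$ are infinite. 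The second half is to extract such an $H$: here I would run a uniform, subset version of the construction behind Theorem~\ref{thm:ht} (in the spirit of Proposition~\ref{lem:uniform}, which already delivers a low $C \oplus H$ for given low $C$, though for $H$ a column rather than a subset), interleaved with a Mathias-type diagonalization against the requirements ``$\Phi^{C \oplus H}_e$ is not a $\{0,1\}$-valued DNR function'', deciding along the way whether to build $H$ inside $A$ or inside $\overline A$.

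The main obstacle is forcing all three demands on $H$ to hold at once: $H$ must thin out the $M$-random $\Delta^0_2$ set $A$ (or $\overline A$), $C \oplus H$ must be kept low by the usual first-jump control (as in Theorem~\ref{thm:ht} and Proposition~\ref{lem:uniform}), and $C \oplus H$ must be kept off the tree of $\{0,1\}$-valued DNR functions. The delicate interaction is between the reservoirs of the Mathias forcing and the DNR requirements: when a DNR requirement cannot be satisfied by forcing a wrong value, one wants a contradiction with $C$ not being of PA degree, and this requires the current reservoir — an infinite subset of $A$ or of $\overline A$ — together with $C$ not to compute a completion of $PA$, which is a weaker instance of the property being built. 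I expect this to be handled by the bootstrapping standard in cone- and PA-avoidance arguments for Ramsey's theorem: one interleaves, with the lowness and DNR requirements, the construction of the successive reservoirs themselves, using a measure/counting argument together with the randomness of $A$ to see that at each stage one of the sets $A \cap V$, $\overline A \cap V$ (for the current reservoir $V$) can be refined to an infinite set whose join with $C$ is still not of PA degree. Granting this, the lemma follows, and with it the non-implication.
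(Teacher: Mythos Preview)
Your outer architecture is the paper's: build an increasing chain of low sets, none of PA degree, and take its downward closure as the model. The divergence is entirely in the step lemma, and there you are working much harder than necessary and leave a real gap.

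Two observations collapse the difficulty. First, once you have produced a $\Delta^0_2$ set $B$ on which the given $\Delta^0_2$ martingale fails, you do not need to extract a subset of it: choose the coloring $f$ so that its limit set \emph{is} $B$ (via a computable approximation), and since $B$ is random it is infinite and coinfinite, so $B$ itself is already an infinite homogeneous set for $f$. Your remark that Proposition~\ref{lem:uniform} delivers ``a column rather than a subset'' misreads the situation: that column $B$ is exactly the limit set of the coloring one wants, and it is trivially a subset of itself. Second, PA-avoidance requires no separate diagonalization. The paper starts from a low $1$-random $L_0$ and, at each step, uses Proposition~\ref{lem:uniform} together with van Lambalgen's theorem to keep $L_{e} = L_{e-1} \oplus B$ both low and $1$-random. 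Stephan's theorem then does all the work: a $1$-random of PA degree computes $\emptyset'$, so a low $1$-random cannot be of PA degree, and neither can anything below it.

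Your alternative route --- a Mathias-type construction that simultaneously thins $A$, keeps $C\oplus H$ low, and diagonalizes against $\{0,1\}$-valued DNR functions --- is not just unnecessary but genuinely problematic as stated. Cone- and PA-avoidance Mathias arguments in the Seetapun/Liu style do not in general produce low solutions, and the ``bootstrapping'' you invoke for handling the reservoirs is a hope, not an argument. If lowness of the $C_s$ is lost, the reduction of martingale approximations in $\mathcal I$ to $\Delta^0_2$ martingales breaks down, and with it your control over the construction. The fix is precisely the paper's: take $H=B$ and carry $1$-randomness along the chain.
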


\begin{proof}
Let $L$ be a given low 1-random set, and let $e \in \omega$ be given.  If $\Phi^{\emp'}_e$ is a total martingale, let $M$, $A$, $B$ and $C$ be as in the proof of Proposition \ref{lem:uniform} with $i$ a lowness index for $L$.  Then $A = B \oplus C$, the set $L \oplus B$ is low, and $B \notin S[\Phi^{\emp'}_e]$.  Furthermore, $A$ is not in $S[M]$ and is therefore $L$-random, so, by van Lambalgen's theorem relative to $L$, $B$ is $L$-random too.  Since $L$ is 1-random, another application of van Lambalgen's theorem yields that $L \oplus B$ is $1$-random.  By iterating, we can thus obtain an increasing sequence of sets $L_0 \leq_T L_1 \leq_T \cdots$ such that each $L_e$ is low, 1-random, and computes a set $B \notin S[\Phi^{\emp'}_e]$ when $\Phi^{\emp'}_e$ is a total martingale.

We let $\M$ be the ideal \mbox{$\{S : (\exists e)[S \leq_T L_e]\}$} and claim first of all that it is a model of $\ASRT^2_2$.  Indeed, suppose that $M \in \M$ is a martingale approximation.  Then $\widetilde{M} : 2^{<\omega} \to \Q^{\geq0}$ defined by $\widetilde{M}(\sigma) = \lim_s M(\sigma,s)$ for all $\sigma$ is a $\Delta^{0,M}_2$ martingale and hence a $\Delta^0_2$ martingale since every element in $\M$ is low.  We can thus fix an $e$ so that $\widetilde{M} = \Phi^{\emp'}_e$.  Then by construction, $L_e$ computes an infinite $\Delta^0_2$ set $B \notin S[\widetilde{M}]$, say with computable approximation $\{B_s\}_{s \in \omega}$.  If we define $f$ by $f(x,s) = B_s(x)$ for all $x < s$, then $f$ is a computable stable coloring, and hence $f \in \M$ and $f \leq_T M$.  Clearly, $M$ does not succeed on $f$ in the sense of Definition \ref{defn:rmmart}, but $B$ computes an infinite homogeneous set $H$ for $f$, which, since $H \leq_T B \leq L_e$, belongs to $\M$.

Now recall that every $\omega$-model of $\WKL$ contains a set of PA degree, and that the class of these degrees is closed upwards (for the former, consider, e.g., the $\Pi^0_1$ class of all $\{0,1\}$-valued DNR functions, and see \cite{DH}, Theorem 1.22.2; for the latter, see \cite{DH}, Theorem 1.21.3).  Also, every 1-random PA degree bounds $\0'$ by the main result of Stephan \cite{St}.  So, as every element of $\M$ is Turing reducible to a low 1-random set, it follows that $\M$ cannot be a model of $\WKL$.
\end{proof}

By Theorems \ref{thm:no_low} and \ref{dam1} respectively, neither $\SRT^2_2$ nor $\ASRAM$ has an $\omega$-model consisting entirely of low sets.  The same is true of $\COH$ because each of its $\omega$-models must contain a p-cohesive set (see \cite{CJS}, p. 27), and each p-cohesive set has jump of degree strictly greater than $\0'$ by Theorem 2.1 of \cite{JSt}.  Hence, we immediately get the following:

\begin{cor}\label{cor:last}
Over $\RCA$, $\ASRT^2_2$ does not imply $\SRT^2_2$, $\ASRAM$, or $\COH$.
\end{cor}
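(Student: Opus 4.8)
The plan is to deduce Corollary \ref{cor:last} directly from the preceding proposition together with two facts already in place in the excerpt, so essentially no new construction is needed. The strategy is: exhibit the $\omega$-model $\M$ built in the proof of the preceding proposition---the ideal $\{S : (\exists e)[S \leq_T L_e]\}$ where each $L_e$ is low and 1-random---and observe that it is a model of $\ASRT^2_2$ consisting entirely of low sets. Then argue that none of $\SRT^2_2$, $\ASRAM$, or $\COH$ admits such a model, so $\ASRT^2_2$ cannot imply any of them over $\RCA$.

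First I would record that $\M$ is a model of $\ASRT^2_2$ in which every set is low; this is exactly what the proof of the previous proposition establishes (each $L_e$ is low, hence so is every $S \leq_T L_e$). Next I would handle the three target principles in turn. For $\SRT^2_2$, cite Theorem \ref{thm:no_low}: there is a computable stable coloring with no low infinite homogeneous set, so any $\omega$-model of $\SRT^2_2$ containing that coloring must contain a non-low set; since $\M$ contains all computable sets and is all-low, $\M \not\models \SRT^2_2$. For $\ASRAM$, cite Theorem \ref{dam1}: since the class of computable stable colorings with an infinite homogeneous set of degree $\leq \mathbf{d}$ is $\Delta^0_2$ null for every $\mathbf{d} < \mathbf{0}'$, in particular for every low $\mathbf{d}$, an all-low $\omega$-model cannot satisfy $\ASRAM$ (the witnessing $Y$ would have to be low and bound homogeneous sets for a non-null class, impossible). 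For $\COH$, use the stated facts that every $\omega$-model of $\COH$ contains a p-cohesive set, and that every p-cohesive set has jump strictly above $\mathbf{0}'$, hence is not low; so again $\M \not\models \COH$. Combining, $\ASRT^2_2$ proves none of these over $\RCA$, since $\M$ is a counterexample model in each case.

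I do not expect any genuine obstacle here, as the corollary is an immediate harvest of the proposition plus Theorems \ref{thm:no_low} and \ref{dam1} and the cited p-cohesiveness facts. The only point requiring a line of care is the $\ASRAM$ case: one must note that $\ASRAM$ relativized to $X = \emp$ already demands a single set $Y$ bounding infinite homogeneous sets for a $\Delta^0_2$-non-null class of computable stable colorings, and that $Y$ lies in $\M$ and is therefore low---directly contradicting Theorem \ref{dam1}. Everything else is a citation. Accordingly, the write-up is the short paragraph already given in the excerpt: ``By Theorems \ref{thm:no_low} and \ref{dam1} respectively, neither $\SRT^2_2$ nor $\ASRAM$ has an $\omega$-model consisting entirely of low sets. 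The same is true of $\COH$ because each of its $\omega$-models must contain a p-cohesive set, and each p-cohesive set has jump of degree strictly greater than $\0'$ by Theorem 2.1 of \cite{JSt}. Hence, we immediately get'' the stated non-implications, the model $\M$ from the previous proposition serving as the common witness.
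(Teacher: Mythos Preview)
Your proposal is correct and follows essentially the same approach as the paper: use the all-low $\omega$-model $\M$ from the preceding proposition as a common witness, then cite Theorem~\ref{thm:no_low}, Theorem~\ref{dam1}, and the Jockusch--Stephan p-cohesiveness result to show that none of $\SRT^2_2$, $\ASRAM$, or $\COH$ can have an all-low $\omega$-model. Your unpacking of the $\ASRAM$ case (that the witnessing $Y$ would be low, hence of degree $<\0'$, contradicting Theorem~\ref{dam1}) is exactly the intended reading of the paper's one-line citation.
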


All the relations between the principles studied above are recapitulated in the following diagram (double arrows indicate implications whose reversals are not provable in $\RCA$).
$$
{\small \xymatrix{
& \ACA  \ar@{=>}[d]\\
& \SRAM \ar@{=>}[d] \ar@{->}[dl]\\
\ASRAM \ar@{=>}[dr] & \SRT^2_2  \ar@{=>}[d]   \\
& \ar@{->}[r] \ar@/_0pc/[r] |-{\object@{|}} |>{\object@{}} \ASRT^2_2  \ar@{=>}[d] & \COH, \WKL \ar@{->}[l] \ar@/_0pc/[l] |-{\object@{|}} |>{\object@{}} \\
& \DNR \ar@{=>}[d] \\
& \RCA
}}
$$
We end by listing a few remaining questions concerning $\ASRAM$ and $\ASRT^2_2$.  Since $\SRT^2_2$ has an $\omega$-model consisting entirely of low$_2$ sets while $\SRAM$ does not, one of the first two would likely be answered by a solution to Question \ref{q:low2}.  The final question concerns the system $\mathsf{WWKL}_0$, introduced in Simpson and Yu \cite{SY}.

\begin{qstn}
Over $\RCA$, does $\ASRAM$ imply $\SRAM$?  Does $\SRT^2_2$ imply $\ASRAM$ or conversely?  Does $\ASRT^2_2$ imply $\mathsf{WWKL}_0$?
\end{qstn}

\noindent $\mathsf{WWKL}_0$ follows from $\WKL$, and so cannot imply $\ASRT^2_2$ by Proposition \ref{prop:wklnoasrt}.  Since the $\omega$-models of $\mathsf{WWKL}_0$ are precisely those that for every set $X$ in them contain also an $X$-random (\cite{AKLS}, Lemma 1.3~(2)), a negative solution to the last question may follow from showing that the collection of $\Delta^0_2$ sets having an infinite subset or cosubset not computing any 1-randoms is not $\Delta^0_2$ null.  It is worth remarking that Kjos-Hanssen \cite{Kj2} (see also \cite{BIRS}, Theorem 7.4) has recently proved the non-effective version of this, showing that almost every infinite subset of $\omega$ has an infinite subset not computing any 1-randoms.



\bibliographystyle{plain}
\bibliography{srtmeasbib}

\end{document}